%%%%%%%%%%%%%%%%%%%%%%%%%%%%%%%%%%%%%%%%%%%%%%%%%%%%%%%%%%%%%%%%
% Bounding invariants of fat points using a coding theory construction
% S. Tohaneanu and A. Van Tuyl
%
% 2011
%
%%%%%%%%%%%%%%%%%%%%%%%%%%%%%%%%%%%%%%%%%%%%%%%%%%%%%%%%%%%%%%%%

\documentclass[12pt]{amsart}
\usepackage{amssymb,amsthm,amsmath,epsfig,latexsym,calc}

\newlength{\bracewidth}
\newcommand{\myunderbrace}[2]{\settowidth{\bracewidth}{$#1$}#1\hspace*{-1\bracewidth}\smash{\underbrace{\makebox{\phantom{$#1$}}}_{#2}}}

% \mmbox enables macros to survive outside of $ ... $
\newcommand{\mmbox}[1]{\mbox{${#1}$}}
\newcommand{\proj}[1]{\mmbox{{\mathbb P}^{#1}}}

\newcommand{\K}{\mathbb{K}}

\newcommand{\rank}{\mathop{\rm rank}\nolimits}

\sloppy

%%%%%%%%%%%%%%%%%%%%%%%%%%%%%%%%%%%%%%%%%%%%%%%%%%%%%%%%%%%%%%%%%%%%%%%

\textwidth=16.00cm
\oddsidemargin=0.00cm
\evensidemargin=0.00cm
\setlength{\parskip}{4pt}

\numberwithin{equation}{section}
\hyphenation{semi-stable}

%%%%%%%%%%%%%%%%%%%%%%%%%%%%%%%%%%%%%%%%%%%%%%%%%%%%%%%%%%%%%%%%%%%%%%%

\newtheorem{theorem}{Theorem}[section]
\newtheorem{lemma}[theorem]{Lemma}

\newtheorem{corollary}[theorem]{Corollary}
\newtheorem{conjecture}[theorem]{Conjecture}

\newtheorem{question}[theorem]{Question}

\theoremstyle{definition}
\newtheorem{definition}[theorem]{Definition}
\newtheorem{remark}[theorem]{Remark}
\newtheorem{example}[theorem]{Example}

\begin{document}

%%%%%%%%%%%%%%%%%%%%%%%%%%%%%%%%%%%%%%%%%%%%%%%%%%%%%%%%%%%%%%%%%%%%%%%%

\title {Bounding invariants of fat points using a coding theory
construction}

\author{\c Stefan O. Toh\v aneanu}
\address{Department of Mathematics\\
The University of Western Ontario\\
London, ON N6A 5B7, Canada}
\email{stohanea@uwo.ca}
\urladdr{http://www.math.uwo.ca/$\sim$stohanea/}

\author{Adam Van Tuyl}
\address{Department of Mathematical Sciences \\
Lakehead University \\
Thunder Bay, ON P7B 5E1, Canada}
\email{avantuyl@lakeheadu.ca}
\urladdr{http://flash.lakeheadu.ca/$\sim$avantuyl/}

\subjclass[2000]{Primary 13D02; Secondary 13D40, 94B27}
\keywords{fat points, free resolution, socle degrees, minimum distance,
complete intersections}
\thanks{Version: Revised Feb. 19, 2012}

\begin{abstract}
Let $Z \subseteq \proj{n}$ be a fat points scheme, and let $d(Z)$ be the
minimum distance of the linear code constructed from $Z$. We show that
$d(Z)$ imposes constraints (i.e., upper bounds) on some specific shifts
in the graded minimal free resolution of $I_Z$, the defining ideal of $Z$.
We investigate this relation in the
case that the support of $Z$ is a complete intersection;
when $Z$ is reduced and a complete intersection we give lower bounds for $d(Z)$
that improve upon known bounds.
\end{abstract}
\maketitle

%%%%%%%%%%%%%%%%%%%%%%%%%%%%%%%%%%%%%%%%%%%%%%%%%%%%%%%%%%%%%%%%%%%%%%%%%%%%%%%%%%%%%%%%%%%%

\section{Introduction and Notations}

Let $\K$ be a field of characteristic zero. Let
$X=\{P_1,\ldots,P_s\}\subset\proj{n}$
be a reduced set of points not all contained in a hyperplane. A {\it
fat point scheme}
 $Z$ in $\proj{n}$ with {\it support} $\operatorname{Supp}(Z)=X$, and denoted
 $$Z = m_1P_1+\cdots +m_sP_s$$ is the zero-dimensional scheme defined by
$I_Z = I^{m_1}_{P_1} \cap \cdots \cap I^{m_s}_{P_s}\subset
R=\K[x_0,\ldots,x_n]$,
where $I_{P_i}$ is the defining ideal of the point $P_i$.
The scheme $Z$ is sometimes called a {\it set of fat points}.
We call $m_i$ the
{\it multiplicity} of the point $P_i$. When all the $m_i$'s are equal,
we say $Z$
is {\it homogeneous}.

To a fat point scheme we associate a linear code with generating matrix
\[A(Z) =
\left[\begin{array}{ccc}
\myunderbrace{c_1 ~~ \cdots ~~ c_1}{m_1} & \cdots &
  \myunderbrace{c_s ~~\cdots~~ c_s}{m_s}\end{array}\right],
\vspace{.5cm}
\] where each $c_i$ is a column vector with entries equal to
the homogeneous coordinates of the point $P_i$. This linear code has
parameters $[m_1+\cdots+m_s,n+1,d]$, where $d$ denotes, as usual, the
minimum Hamming distance of the code. Depending on the situation, $d$
(denoted with $d(Z)$) will be called {\it the minimum distance of the
matrix $A(Z)$},
or {\it the minimum distance of the fat point scheme $Z$}.

Note that in the matrix $A(Z)$, if we replace a column $c_i$ with any
of its (nonzero)
scalar multiples, or if we permute in any way the columns of $A(Z)$,
the parameters
of this linear code do not change. As a consequence of this simple
observation
one can create a fat point scheme $Z$ from any generating matrix of any linear
code, by identifying the columns of this matrix with points (fat
points, if some
columns are proportional) in a projective space. Hansen \cite{h},
Gold, Little, and
Schenck \cite{gls} and the first author \cite{t3} took this approach
in the case
when $Z$ is reduced (i.e., $m_i=1$, or the generating matrix has no
proportional
columns) to obtain bounds on the minimum distance using homological algebra. In
particular, it was shown (\cite{gls,t1,t3}) that the minimum distance
$d$ can be
bounded below in terms of the graded
shifts appearing in the graded minimal free resolution of $R/I_Z$.
%Here,
%$R = \K[x_0,\ldots,x_n]$ and $I_Z$ is the defining ideal of the set of points
%$Z = \{P_1,\ldots,P_s\}$ constructed from the generating matrix.
The lower bounds of $d=d(Z)$ can be interpreted as upper bounds for
the corresponding homological
 invariants of $Z$; our main goal is to consider the same problem, but
we drop the
condition that $Z$ is reduced.

Let $Z \subseteq \proj{n}$ be any zero-dimensional (arithmetically)
Cohen-Macaulay subscheme. Let $R=\K[x_0,\ldots,x_n]$, and let $I=I_Z$
be the ideal of
$Z$. Suppose that the graded minimal free resolution of $R/I$ has the form
\[0 \rightarrow F_{n} = \bigoplus_{j=1}^{u_n} R(-a_{j,n}) \rightarrow
\cdots \rightarrow F_1=\bigoplus_{j=1}^{u_1} R(-a_{j,1}) \rightarrow R
\rightarrow R/I \rightarrow 0. \]
We set $t_i = \min_j\{a_{j,i}\},$ for each $i=1,\ldots,n$, and we
call $s_n(Z)=t_n-n$ the {\it minimum socle degree} of $Z$.

\begin{remark}
We are abusing terminology slightly in the above definition.  Since
$R/I$ is a Cohen-Macaulay ring of dimension $1$, we can find a
linear form $L$ that is a non-zero divisor on $R/I$.  Then, the ring $A=
R/(I,L)$
is an Artinian ring.  The socle of this
ring is $\operatorname{soc}(A) = 0:\overline{m}$ where
$\overline{m} = \bigoplus_{j \geq 1} A_j$.  The socle is an ideal
of $A$;  it can be shown that the degrees of the minimal generators of
this ideal
are encoded into the graded shifts at the end of the graded minimal
resolution of $R/I$, but shifted by $n$ (see \cite{ku} for more details).
We call $s_n(Z)$ the
minimum socle degree to recognize this connection.
\end{remark}

When $Z$ is reduced, the homological lower bounds for the minimum distance
mentioned above are expressed in terms of the minimum socle degree, that is,
$$d(Z)\geq s_n(Z).$$
One can observe that the
minimum distance puts constraints on the shifts in the graded minimal free
resolution of $Z$: $d(Z)+n\geq t_n\geq t_{n-1}+1\geq\cdots\geq t_1+n-1$.
Example \ref{ci} shows that the minimum distance will never precisely determine
the minimum socle degree. Nevertheless, in \cite{t3} it is shown that
$d(Z) = s_n(Z)$
is attained for a family of examples due to J. Migliore.

The goal of this paper is to obtain similar constraints for the graded minimal
free resolution of a fat point scheme. Once we add multiplicities to
the points,
even when the support is as nice as possible (e.g., complete intersection),
the shifts in the resolution change with no visible pattern.

In this paper, we take the point-of-view of describing how
$d(Z)$, the minimum distance of a linear code constructed
from $Z$,
can be used to bound homological invariants of $I_Z$.  One could invert this
point-of-view by studying how homological invariants are related to
the minimum distance, and linear codes in general.
The references \cite{gls,h,t1} took this second approach.
Because some of our results do not require $char(\K)=0$,
these results could also be used to study linear codes.  We see our results
as complementing ongoing research to algebraically study linear codes.
For example, the references \cite{clo,tvn}
provide an entry point for readers who would like to learn more about
coding theory
from an  algebraic geometric perspective.  Feng, Rao, and Berg
\cite{frb} present a
version of B\'ezout's Theorem using resultants to address a lower bound for the
minimum distance of a special class of algebraic geometric codes. And
more recently,
Sarmienta, Vaz Pinto, and Villarreal \cite{svv} used algebraic methods to study
problems arising from coding theory.

Our paper is structured as follows. In Section 2.1 we present a short
introduction
 to the study of the minimum distance of a linear code. We consider the
relationship between the minimum distance of the linear code created
from a fat point
scheme $Z$ and the minimum distance of the linear code constructed from
$X=\operatorname{Supp}(Z)$ (Theorem \ref{crudebounds}). The bounds obtained are
optimal, as shown by examples. Since we have not found this result
in the literature, we decided to write down the detailed proof, even though the
result seems to be natural. In Section 2.2, Theorem \ref{hombound}
finds an upper
 bound for the first homological invariant of the fat point scheme $Z$ in terms
 of the minimum distance of the associated linear code. The invariant
considered
is $t_1$, the minimal degree of a generator of $I_Z$. In Section 3 we
present our
 main result, Theorem \ref{maintheorem}, which gives an upper bound
for the minimum
socle degree of the fat points scheme in terms of the minimum distance
of its support.
 The main tool used is the machinery of separators of fat points
developed in \cite{gmv}.
In Section 4, we specialize to the case that the support of
the fat points $Z$
in $\proj{n}$ is a complete intersection. In particular, we use
B\'ezout's Theorem in
Corollary \ref{bezout} to give new lower bound on $d(Z)$.

\noindent
{\bf Acknowledgments.}  All the computations were done using {\it
Macaulay 2} \cite{Mt}.
The second author was supported by a Discovery Grant
from NSERC.
We thank the anonymous referee for important comments and suggestions,
especially for
pointing out that the version of B\'ezout's Theorem that we require also holds
for algebraically closed fields of positive characteristic.

%%%%%%%%%%%%%%%%%%%%%%%%%%%%%%%%%%%%%%%%%%%%%%%%%%%%%%%%%%%%%%%%%%%%%%%%%%%%%%%%%%%%%%%%%%%%

\section{Minimal degree of hypersurfaces containing $Z$}

%%%%%%%%%%%%%%%%%%%%%%%%%%%%%%%%%%%%%%%%%%%%%%%%%%%%%%%%%%%%%%%%%%%%%%%%%%%%%%%%%%%%%%%%%%%%%

\subsection{The minimum distance of a linear code}

Let $\K$ be any field, and let $n\geq 1$ and $s\geq n+1$ be two integers.
A {\em linear code} $\mathcal C$ of length $s$ and dimension $n+1$ is the
image of an injective $\K$-linear map $\phi:\K^{n+1}\rightarrow \K^s$.
The {\em minimum (Hamming) distance} $d$ of $\mathcal C$ is the minimum
number of nonzero entries in a nonzero element (codeword) in $\mathcal C$.
The numbers $s$, $n+1$ and $d$ are called {\em the parameters}
of $\mathcal C$, and the code is called an $[s,n+1,d]$-code.

Any matrix representation of $\phi$ is called a {\em generating matrix}. This
representation is an $(n+1)\times s$ matrix with entries in $\K$,
of rank $n+1$. Usually one writes this matrix representation
of $\phi$ in the standard bases of $\K^{n+1}$ and $\K^s$. We can reverse
the process: if $A$ is an $(n+1)\times s$ matrix of rank $n+1$, we can
create a linear code having $A$ as a generating matrix, by
constructing the map $\phi$ from $A$, using the standard bases.

Let $c_1,\ldots,c_s \in \K^{n+1}$ be
$s \geq n+1$ vectors which have the property
that no vector is a scalar multiple of another vector.
That is, for every $i \neq j$, $c_i \neq ec_j$ for every $0 \neq e \in
\K$.
Let
\[A(X) =
\begin{bmatrix}
c_1 & \cdots & c_s
\end{bmatrix},\]
and assume that the vectors $c_i$ have been picked so that
$\rank(A(X)) = n+1$.   We call this matrix {\it reduced}.
With the same vectors, fix integers $m_1,\ldots,m_s$, and set
\begin{equation}\label{standardform}
A(Z) =
\left[\begin{array}{ccc}
\myunderbrace{c_1 ~~ \cdots ~~ c_1}{m_1} & \cdots &
  \myunderbrace{c_s ~~\cdots~~ c_s}{m_s}\end{array}\right].
\newline
\vspace{.5cm}
\end{equation}
When $m_1 = \cdots = m_s = 1$, then $A(Z) = A(X)$.  If $m_i > 1$,
then we say $A(Z)$ is {\it non-reduced}, and we call $A(X)$ the {\it
reduced matrix} associated to $A(Z)$.

\begin{definition} If $A$ is an $(n+1)\times s$ matrix with entries in a
field $\K$
with rank $n+1$, then the {\it minimum distance of $A$} is
\[d(A) = \min\{d ~|~ \mbox{there exists $s-d$ columns of $A$ that span an
$n$-dimensional
space}\}.\]
\end{definition}

If $A=A(X)$, respectively, $A=A(Z)$, we will denote $d(A)$ by $d(X)$,
respectively $d(Z)$.

\begin{remark} If $\mathcal C\subseteq \K^s$ is a linear code with
parameters $[s,n+1,d]$,
then $d$ is equal to the minimum distance of any generating matrix of $\mathcal
C$, as we
defined above.  

We give a simple linear algebra argument for this fact.
Suppose that $$A=\left[
\begin{array}{cccc}
a_{1,1}&a_{1,2}&\cdots&a_{1,s} \\
\vdots&\vdots& &\vdots \\
a_{n+1,1}&a_{n+1,2}&\cdots&a_{n+1,s}
\end{array}
\right]$$ is a generating matrix for $\mathcal C$. Denote with 
$r_1,\ldots,r_{n+1}$ the rows of $A$, and with 
$c_1,\ldots,c_s$ the columns. It is enough to show that there 
exists a codeword $v$ (so a linear combination of the $r_i$'s) with 
$j$ zero entries in positions $i_1,\ldots,i_j$ if and only if 
the dimension of the vector space spanned by 
$c_{i_1},\ldots,c_{i_j}$ is $\leq n$.

$(\Rightarrow)$ Let $v=u_1r_1+\cdots+ u_{n+1}r_{n+1}, u_i\in \mathbb K$
be a codeword with the first $j$ entries equal to zero. 
This means that $$u_1a_{1,1}+\cdots+u_{n+1}a_{n+1,1}=0,~\ldots,~ 
u_1a_{1,j}+\cdots+u_{n+1}a_{n+1,j}=0.$$ 
Equivalently,
 $$c_1,\ldots,c_j\in\{(x_1,\ldots,x_{n+1})\in\mathbb K^{n+1}~|~u_1x_1+\cdots+u_{n+1}x_{n+1}=0\}.$$ So we have $j$ points of $\K^{n+1}$ on a hyperplane, which
implies
$$\dim \operatorname{span} \langle c_1,\ldots,c_j\rangle\leq n.$$

$(\Leftarrow)$ Suppose 
$\dim \operatorname{span} \langle c_1,\ldots,c_j\rangle\leq n$. 
If we consider the matrix whose $i$th row is given by $c_i$,  i.e.,
take the transpose of the matrix with columns given by then $c_i$'s,
then this matrix has rank at most $n$.  But
this means that the homogeneous system of 
equations in the variables $y_1,\ldots,y_{n+1}$
\begin{eqnarray}
a_{1,1}y_1+\cdots+a_{n+1,1}y_{n+1}&=&0\nonumber\\
&\vdots&\nonumber\\
a_{1,j}y_1+\cdots+a_{n+1,j}y_{n+1}&=&0\nonumber
\end{eqnarray} 
must have a nontrivial solution 
$(u_1,\ldots,u_{n+1})$. 
This now means that the codeword 
$v=u_1r_1+\cdots+u_{n+1}r_{n+1}$ has the first $j$ entries equal to zero.
\end{remark}

\begin{remark} Suppose we are given any $t \geq n+1$ vectors in
$a_1,\ldots,a_t \in \K^{n+1}$,
such that $A = \begin{bmatrix} a_1 & \cdots & a_t \end{bmatrix}$ has
rank $n+1$.  Then,  rescaling any proportional vectors or permuting
columns of $A$ does not change the value of the minimum distance $d(A)$
for
this matrix.
\end{remark}

The value of $d(Z)$ is related to $d(X)$ as follows:

\begin{theorem} \label{crudebounds}
Let $A(Z)$ be a matrix of the form $(\ref{standardform})$ and
assume that the columns of $A(Z)$ have also
been permuted so that $m_1 \geq m_2 \geq \cdots \geq m_s$.
If $A(X)$ is the reduced matrix associated to $A(Z)$,  and $d(X) = d$,
then
\[m_1+\cdots+m_d \geq d(Z) \geq m_{s-d+1} + \cdots + m_s. \]
In addition, if $m_1 = \cdots = m_s = m$, then $d(Z) = md(X)$.
\end{theorem}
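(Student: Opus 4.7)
The plan is to reformulate the definition of $d(A)$ as a statement about hyperplane sections of the point configuration, and then read off both bounds from the sorted multiplicity sequence.

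First I would observe that a set of $s-d$ columns of $A$ spans at most an $n$-dimensional subspace exactly when they are contained in some hyperplane of $\K^{n+1}$. Hence
\[
d(A) \;=\; (\text{total number of columns of }A) \;-\; M(A),
\]
where $M(A)$ is the largest number of columns of $A$ that are contained in some hyperplane. For the specific matrix $A(Z)$ from (\ref{standardform}), the columns associated to the point $P_i$ all lie in a hyperplane $H$ precisely when $P_i \in H$, so the number of columns of $A(Z)$ in $H$ equals $\sum_{P_i \in H} m_i$. Writing $N = m_1 + \cdots + m_s$, this gives
\[
d(Z) \;=\; N \;-\; M', \qquad M' \;=\; \max_{H \subset \proj{n} \text{ hyperplane}} \sum_{P_i \in H} m_i.
\]
In parallel, $d(X) = d$ says that the maximum number of points of $X$ lying in any hyperplane is exactly $s-d$: some hyperplane $H^\star$ achieves this, and no hyperplane exceeds it.

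For the upper bound on $d(Z)$, I would apply $H^\star$: it contains $s-d$ of the points $P_i$, so the sum of their multiplicities is at least the sum of the $s-d$ smallest among $m_1 \geq \cdots \geq m_s$, namely $m_{d+1} + \cdots + m_s$. Therefore $M' \geq m_{d+1} + \cdots + m_s$, and subtracting from $N$ yields
\[
d(Z) \;\leq\; m_1 + m_2 + \cdots + m_d.
\]
For the lower bound, I would use the universal bound: every hyperplane contains at most $s-d$ points of $X$, and the sum of multiplicities of any $s-d$ of them is at most the sum of the $s-d$ largest among the $m_i$, namely $m_1 + \cdots + m_{s-d}$. So $M' \leq m_1 + \cdots + m_{s-d}$, which gives
\[
d(Z) \;\geq\; N - (m_1 + \cdots + m_{s-d}) \;=\; m_{s-d+1} + \cdots + m_s.
\]
Finally, in the homogeneous case $m_1 = \cdots = m_s = m$, both bounds collapse to $md$, so $d(Z) = md = m \cdot d(X)$.

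The only genuinely subtle step is the reformulation $d(A) = (\text{number of columns}) - M(A)$; once that hyperplane-counting viewpoint is in place, both inequalities are immediate consequences of the ordering $m_1 \geq \cdots \geq m_s$, since the upper bound picks out the largest $d$ multiplicities and the lower bound picks out the smallest $d$.
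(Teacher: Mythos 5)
Your proposal is correct and follows essentially the same route as the paper: the paper phrases the argument via the collection $\Lambda$ of column subsets spanning an $n$-dimensional space and computes $M - d(Z)$ as a maximum over $\Lambda$, which is exactly your hyperplane-counting identity $d(Z) = N - M'$ (the paper records this reformulation separately as its Remark \ref{geometric}). Both bounds are then extracted from the ordering $m_1 \geq \cdots \geq m_s$ in the same way you do.
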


\begin{proof}
Let
\[\Lambda = \left\{W = \{c_{i_1},\ldots,c_{i_{e}}\} \subseteq
\{c_1,\ldots,c_s\} ~\left|~ \dim (\mbox{span} \langle W \rangle) = n
\right\}\right.,\]
i.e., $\Lambda$ is the collection of all $e$ columns that
span an $n$-dimensional space.  In particular, since $d(X) =d$,
we can find $s-d$ columns $c_{i_1},\ldots,c_{i_{s-d}}$ such
that $\{c_{i_1},\ldots,c_{i_{s-d}}\} \in \Lambda$.  So,
if $W \in  \Lambda$, then $n \leq |W| \leq s-d$, where
the first inequality comes from the fact that one needs
at least $n$ vectors to span an $n$-dimensional space.

Computing $d(Z)$ is equivalent to finding the maximum number
of column vectors of $A(Z)$ that span an $n$-dimensional space.
If any column of $A(Z)$ is used to span this $n$-dimensional
space, we should also take all copies of that column;  each
extra column does not contribute to the dimension (being
a scalar multiple of the first column) but it contributes
to the total number of columns being used.

Thus, if $M = m_1+\cdots+m_s$,
we can find $e$ distinct columns $c_{i_1},\ldots,c_{i_e}$
such that the following $M-d(Z)$ columns of $A(Z)$
\[\{\underbrace{c_{i_1},\ldots,c_{i_1}}_{m_{i_1}},\ldots,
  \underbrace{c_{i_e},\ldots,c_{i_e}}_{m_{i_e}}\}\]
span an $n$-dimensional subspace of $\K^{n+1}$.
But then $\{c_{i_1},\ldots,c_{i_{e}}\} \in \Lambda$.
Thus
\[M-d(Z) = \max\{{m_{i_1}+\cdots+m_{i_{e}}} ~|~ \{c_{i_1},\ldots,c_{i_e}\}
\in \Lambda \}.\]
So $M-d(Z)\geq m_{i_1}+\cdots+m_{i_e}$ for all
$W=\{c_{i_1},\ldots,c_{i_e}\} \in \Lambda$.
Because there must exist $W\in\Lambda$ with $|W|=s-d$, we obtain that
$$M-d(Z)\geq m_{i_1}+\cdots+m_{i_{s-d}},$$ for some
$i_1,\ldots,i_{s-d}\in[s]$.

If one has a (finite) decreasing (not necessarily strictly) sequence of
numbers, then the sum of any $k$ terms of this sequence is greater than or
equal
to the sum of the last $k$ terms of the sequence. In our case the sequence
is $m_1\geq m_2\geq\cdots\geq m_s$, and $k=s-d$. So $$M-d(Z)\geq
m_{d+1}+\cdots+m_s,$$
and therefore $$m_1+\cdots+m_d\geq d(Z).$$

Alternatively, we can write
\[d(Z) = \min\left\{m_{j_1}+\cdots+m_{j_{s-e}} ~\left|~
\begin{array}{c}
\{j_1,\ldots,j_{s-e}\} = [s] \setminus \{i_1,\ldots,i_{e}\} \\
\text{with}~~
\{c_{i_1},\ldots,c_{i_{e}}\} \in \Lambda
\end{array} \right\}\right..
\]
Because any $W \in \Lambda$ has $|W| \leq s-d$, the smallest
sum we can have contains $s-(s-d) = d$ terms.  Moreover,
since $m_1 \geq m_2 \geq \cdots
\geq m_s$, we must have $$d(Z) \geq \ m_{s-d+1} + \cdots + m_{s}.$$

When $m_1 = \cdots = m_s = m$, our two bounds give $md\geq d(Z)\geq md.$
\end{proof}

\begin{remark}
When $m_1 = \cdots = m_s = m$, then the corresponding linear code
is sometimes called a {\it $m$-fold repetition code} (see \cite{w}).
\end{remark}

\begin{example}
Both of the bounds of Theorem \ref{crudebounds} can be attained.
Consider the following two matrices with entries in $\mathbb{F}_2$, the
finite field with two elements:
\[A(Z_1) =
\begin{bmatrix}
1 & 1 & 1 & 0 & 0 & 0 & 0 & 0 & 0 \\
0 & 0 & 0 & 1 & 1 & 0 & 0 & 1 & 1 \\
0 & 0 & 0 & 0 & 0 & 1 & 1 & 1 & 1
\end{bmatrix}
~~\text{and}~~
A(Z_2) =
\begin{bmatrix}
0 & 0 & 1 & 0 & 0 \\
1 & 1 & 0 & 1 & 0 \\
1 & 1 & 0 & 0 & 1
\end{bmatrix}.\]
These two matrices have the same reduced associated matrix
\[A(X) =
\begin{bmatrix}
1 & 0 & 0 & 0\\
0 & 1 & 0 & 1\\
0 & 0 & 1 & 1
\end{bmatrix},\] which has $d=d(X)=1$.

For $A(Z_1)$ we have $d(Z_1)=3$, and $m_1=3, m_2=m_3=m_4=2$. So the upper
bound
in Theorem \ref{crudebounds} is attained.
For $A(Z_2)$, we have $d(Z_2)=1$, and $m_1=2, m_2=m_3=m_4=1$.
In this case, the lower bound in Theorem \ref{crudebounds} is attained.
\end{example}

%%%%%%%%%%%%%%%%%%%%%%%%%%%%%%%%%%%%%%%%%%%%%%%%%%%%%%%%%%%%%%%%%%%%%%%%%%%%%%%%%%

\subsection{An upper bound for $t_1$} Let $\K$ be any field and let
$Z\subset\mathbb P_{\K}^n$ be a fat point scheme defined by $I_Z\subset
R=\mathbb K[x_0,\ldots,x_n]$. Let $X$ be the support of $Z$, so $X$ is a
reduced finite set of points. Assume that they are not all contained
in a hyperplane in $\proj{n}$.

As described above, let $A(X)$ be the reduced matrix associated to
 $X$ and let $A(Z)$ the non-reduced matrix associated to $Z$. Let $d(X)$, and
$d(Z)$ respectively, be the minimum distances of these matrices.

\begin{remark} \label{geometric} We can reinterpret
$d(X)$ as a geometric condition.    Let $hyp(X)$ denote the  maximum
number of points of $X$ contained in a hyperplane.  Then
\[d(X) = |X| - hyp(X).\]
We can observe this by noting that the columns corresponding to the points
in the hyperplane must span a vector space of dimension $n$.
\end{remark}

The ring $R/I_Z$ has a graded minimal free resolution as given in the
introduction. Let $\alpha(I_Z) := t_1 = \min\{u ~|~ (I_Z)_u \neq 0\}$.
So $\alpha(I_Z)$ is the minimal degree of a hypersurface containing $Z$.

\begin{theorem}\label{hombound}
Let $Z = m_1P_1 + \cdots + m_sP_s \subseteq \proj{n}$ be a fat point
scheme. Set $m(Z) = \max\{m_1,\ldots,m_s\}$. Then
\[d(Z) \geq \alpha(I_Z) - m(Z).\]
\end{theorem}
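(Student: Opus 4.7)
The plan is to exhibit an explicit element $F\in I_Z$ of degree $m(Z)+d(Z)$; once produced, this forces $\alpha(I_Z)\le m(Z)+d(Z)$, which is the claimed inequality.

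First I would translate $d(Z)$ into a geometric statement, extending Remark \ref{geometric} to the weighted setting. The proof of Theorem \ref{crudebounds} shows that $d(Z)=M-\max_W\sum_{i\in W}m_i$, where $M=m_1+\cdots+m_s$ and the maximum runs over subsets $W\subseteq\{1,\ldots,s\}$ for which the columns $\{c_i\}_{i\in W}$ span an $n$-dimensional subspace of $\K^{n+1}$. Dually, such a subspace is cut out by a hyperplane $H\subseteq\proj{n}$, and $c_i$ lies in that subspace exactly when $P_i\in H$. A maximizer therefore corresponds to a hyperplane $H$, and after enlarging the maximizer to include every support point lying on $H$ (which can only increase the weighted sum while keeping the spanning subspace equal to $V$), we obtain a hyperplane $H$ satisfying
$$\sum_{P_i\notin H}m_i=d(Z).$$

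Next I would build the hypersurface. Let $L_H\in R$ be a linear form defining $H$, and for each $P_i\notin H$ pick any linear form $L_i\in R$ vanishing at $P_i$. Set
$$F=L_H^{m(Z)}\cdot\prod_{P_i\notin H}L_i^{m_i}\in R,\qquad \deg F=m(Z)+d(Z).$$
To conclude I need $F\in I_Z=\bigcap_j I_{P_j}^{m_j}$, i.e., $F$ vanishes to order at least $m_j$ at every $P_j$. If $P_j\in H$, then $L_H^{m(Z)}$ already vanishes at $P_j$ to order $m(Z)\ge m_j$, and multiplying by anything only raises this order. If $P_j\notin H$, the factor $L_j^{m_j}$ appears in $F$; since $L_j$ vanishes at $P_j$, the Leibniz rule shows that $F$ vanishes at $P_j$ to order at least $m_j$. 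Hence $F\in I_Z$ and $\alpha(I_Z)\le\deg F=m(Z)+d(Z)$.

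The one real subtlety is the first step: extracting from the combinatorial minimum-distance characterization in Theorem \ref{crudebounds} a hyperplane whose off-support weighted multiplicity is exactly $d(Z)$. After that translation, the construction of $F$ and the Leibniz-rule multiplicity check are routine.
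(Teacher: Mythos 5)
Your proof is correct, but it takes a genuinely different and more direct route than the paper. The paper proves the theorem by induction on the tuple $(s,(m_1,\ldots,m_s))$: it deletes one column $c\notin\Omega$ from $A(Z)$, splits into cases according to whether the truncated matrix still has rank $n+1$ or drops to rank $n$, and in the first case compares $d(Z)$ with $d(Z')$ and $\alpha(I_Z)$ with $\alpha(I_{Z'})$ for the smaller scheme $Z'=(m_1-1)P_1+m_2P_2+\cdots+m_sP_s$. Its base case ($m_i\equiv 1$) is exactly your construction specialized to multiplicity one: a hyperplane through $s-d(X)$ points times linear forms through the remaining $d(X)$ points. What you observed is that this base-case construction generalizes in one step to arbitrary multiplicities: the identity $M-d(Z)=\max_{W\in\Lambda}\sum_{i\in W}m_i$ extracted from the proof of Theorem \ref{crudebounds} produces a hyperplane $H$ with $\sum_{P_i\notin H}m_i=d(Z)$, and then $F=L_H^{m(Z)}\prod_{P_i\notin H}L_i^{m_i}$ lies in $I_Z$ (since $L_H^{m(Z)}\in I_{P_j}^{m_j}$ for $P_j\in H$ because $m(Z)\ge m_j$, and $L_j^{m_j}\in I_{P_j}^{m_j}$ otherwise, with ideal-membership preserved under multiplication) and has degree exactly $m(Z)+d(Z)$. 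This eliminates the induction and the two-case rank analysis entirely, and has the added virtue of exhibiting an explicit hypersurface of degree $d(Z)+m(Z)$ containing $Z$, which makes the source of the bound transparent; the only thing the paper's inductive argument offers in exchange is that it avoids having to restate the weighted reformulation of $d(Z)$ as a geometric statement about hyperplanes, but that reformulation is already implicit in Remark \ref{geometric} and the proof of Theorem \ref{crudebounds}, so nothing is lost.
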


\begin{proof}
We first consider the case that $A(Z) = A(X)$, i.e.,
$m_i =1$ for $i=1,\ldots,s$ for any integer $s \geq n+1$.
Suppose that $d(X) < \alpha(I_X) -1$.
Because $X$ does not lie in a hyperplane, $\alpha(I_X) \geq 2$. From
  Remark \ref{geometric}, there is a hyperplane of equation $H=0$ that
contains $s -d(X)$ of the points of $X$.  For the remaining $d(X)$ points,
say $Q_1,\ldots,Q_{d(X)}$,
let $L_i$ be any linear form in the ideal of the point $Q_i$.
Then the hypersurface defined by $G= H\cdot L_1\cdots L_{d(X)}$ passes
through all the points
of $X$, and $\deg G = 1 + d(X) < \alpha(I_X)$, a contradiction.
So $d(X) \geq \alpha(I_X) -1$.

We now proceed by induction on the tuple $(s,(m_1,\ldots,m_s))$,
that is, we assume that the statement holds for
all tuples of the form $(s,(a_1,\ldots,a_s))$ with
\[(\underbrace{1,\ldots,1}_s) \preceq (a_1,\ldots,a_s) \prec
(m_1,\ldots,m_s),\]
or for all tuples of the form $(s-1,(a_1,\ldots,\hat{a}_i,\ldots,a_s))$
where $s-1 \geq n+1$ and
$$(\underbrace{1,\ldots,1}_{s-1})
\preceq (a_1,\ldots,\hat{a}_i,\ldots,a_s) \prec
(m_1,\ldots,\hat{m}_i,\ldots,m_s).$$
Here $\hat{\hspace{.1cm}}$ denotes the removal of an element from a tuple,
and $(a_1,\ldots,a_n)\preceq (b_1,\ldots,b_n)$ if and only if $a_i\leq
b_i$ for all $i$.

Let
\[A(Z) = \left[\begin{array}{ccc}
\myunderbrace{c_1 ~~ \cdots ~~ c_1}{m_1} & \cdots &
  \myunderbrace{c_s ~~\cdots~~ c_s}{m_s}\end{array}\right].
\newline
\vspace{.5cm}
\]
Denote $d(Z) = d$ and $A(Z)=A$.
From the definition, we can find $d$ columns such that
$M-d$ is maximum number of columns in $A$ that span
an $n$-dimensional vector space.  Here, $M = m_1 + \cdots +m_s$.
Let $\Omega$ denote the set of these $M-d$ columns.

Let $c$ be any column of $A$, with $c\notin \Omega$.
Such a column exists, because if every column of $A$
belonged to $\Omega$, then the rank of $A$ would not be $n+1$.

Let $A'$ be the matrix obtained from $A$ after removing the column $c$.
We now consider two cases.

\noindent
{\bf Case 1.} $\rank(A')=n+1$.

\noindent
Because $c\notin\Omega$, then
$\Omega$ consists of columns of $A'$, and the columns
in $\Omega$ span an $n$-dimensional vector space. If we let
$d'$ denote the minimum distance of $A'$, we then have
$|\Omega| \leq (M-1)-d'$.  But because
$|\Omega|=M-d$, we obtain $d \geq d'+1$.

After permuting the columns of $A$, we can assume that
we have removed the first column of $A$ to construct
$A'$.  We then associate to $A'$ the fat point scheme
$Z' = (m_1-1)P_1 + m_2P_2 + \cdots + m_sP_s$.

Let $F \in (I_{Z'})_{\alpha(I_{Z'})}$
be any form of smallest degree in $I_{Z'}$, and let $L \in (I_{P_1})_1$ be
any linear form in the ideal $I_{P_1}$.  Then $FL \in
(I_Z)_{\alpha(I_{Z'})+1}$,
whence $$\alpha(I_{Z'})+1 \geq \alpha(I_Z).$$

If $m_1 \geq 2$, then by induction we have that $d' = d(Z') \geq
\alpha(Z') - m(Z')$.
Similarly, if $m_1 = 1$, then we must have $s - 1 \geq  n+1$.  This
is because if we remove the first column from $A$, the columns
of $A'$ all correspond to points in the set $\{P_2,\ldots,P_s\}$.  Since
the matrix $A'$ has rank $n+1$, we must have at least $n+1$ distinct
points
in this set.  But then by induction, we also know that $d' = d(Z')
\geq \alpha(Z') - m(Z')$.

Because $m(Z) \geq m(Z')$, when we
put together our pieces, we find the desired bound:
\[d(Z) \geq d(Z')+1 \geq \alpha(I_{Z'})-m(Z')+1 \geq \alpha(I_Z) - m(Z).\]

\noindent
{\bf Case 2.} $\rank(A')=n$.

If $\rank(A') = n$, then the column $c$ only appears in $A$ exactly once.
Furthermore, all the $M-1$ columns of $A'$ span a
$n$-dimensional vector space, and therefore $$M-1\leq M-d,$$ and so $d=1$,
because the
minimum distance must be positive.

After permuting the columns of $A$, we can assume that $c$ is the first
column $c_1$.  Moreover, the distinct columns  $c_2,\ldots,c_s$ in $A$
span an $n$-dimensional vector space. This means that the points
associated to these
columns are contained in a hyperplane defined by
a linear form $H$. So $$H^{m(Z)}\in I_2^{m_2}\cap\cdots\cap I_s^{m_s}.$$

Let $L \in I_1$ be a linear form vanishing at the point associated to
$c_1$.
Then $L\cdot H^{m(Z)}\in I_Z$, and therefore $m(Z)+1\geq \alpha(I_Z)$
which gives us $$d=1\geq \alpha(I_Z)-m(Z)$$ for this case as well.
\end{proof}

When $Z=X$ is reduced, the bound we obtained in the previous theorem can only be
attained in a very special situation.

\begin{theorem} \label{boundscor}
Let $X = \{P_1,\ldots,P_s\}$ be a reduced set of points, not all
contained in a hyperplane. Then $d(X) = \alpha(I_X) - 1$ if and only
if $s-1$ points of $X$ lie on a hyperplane.
\end{theorem}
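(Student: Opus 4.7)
The plan is to use the geometric characterization $d(X) = s - hyp(X)$ from Remark \ref{geometric} throughout, and in the nontrivial direction to sharpen the form construction used in the proof of Theorem \ref{hombound}.

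For the converse direction, suppose that $P_1,\ldots,P_{s-1}$ lie on a hyperplane $H$. Since $X$ is not contained in any hyperplane, $hyp(X) = s-1$, giving $d(X) = 1$ by Remark \ref{geometric}. For $\alpha(I_X)$, we have $\alpha(I_X) \geq 2$ (no linear form vanishes on all of $X$) while $\alpha(I_X) \leq 2$ because $H \cdot L \in I_X$ for any linear form $L$ vanishing at $P_s$. Hence $\alpha(I_X) = 2 = d(X) + 1$.

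For the direct implication, set $d = d(X) = \alpha(I_X) - 1$ and argue by contradiction that $d \geq 2$ cannot occur. By Remark \ref{geometric} there is a hyperplane $H$ containing exactly $s - d$ points of $X$, say $P_1,\ldots,P_{s-d}$; denote the remaining $d$ points by $Q_1,\ldots,Q_d$. The plan is to refine the construction from the proof of Theorem \ref{hombound} by making two of the ``outer'' points share a single linear form. Choose a linear form $L$ whose vanishing locus contains both $Q_1$ and $Q_2$ (possible since any two points of $\proj{n}$ with $n \geq 2$ lie in a common hyperplane), and linear forms $L_3,\ldots,L_d$ vanishing at $Q_3,\ldots,Q_d$ respectively. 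Then
\[
F = H \cdot L \cdot L_3 \cdots L_d
\]
has degree $1 + 1 + (d-2) = d$ and vanishes on all of $X$, contradicting $\alpha(I_X) = d + 1$. Therefore $d = 1$, and by Remark \ref{geometric} this yields $hyp(X) = s-1$, i.e., $s-1$ points of $X$ lie on a hyperplane.

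The crux of the argument is the trick of forcing two of the $d$ ``extra'' points to share one hyperplane, which shaves exactly one degree off the product of linear forms and produces the contradiction precisely under the hypothesis $d \geq 2$. This is the only step that goes beyond the proof of Theorem \ref{hombound}; the remainder is bookkeeping with Remark \ref{geometric}, and there is no serious technical obstacle.
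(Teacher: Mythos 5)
Your proof is correct and follows essentially the same route as the paper: the forward direction uses the identical trick of choosing one linear form through two of the points off the hyperplane so that $H\cdot L\cdot L_3\cdots L_d$ has degree $d < \alpha(I_X)$, and the converse is the same computation (the paper invokes Theorem \ref{hombound} where you verify $\alpha(I_X)=2$ directly, a negligible difference).
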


\begin{proof}
$(\Leftarrow)$  The above theorem gives $d(X) \geq \alpha(I_X)-1$.  On the
other hand,
the set of points $X$ do not lie on a hyperplane, so $\alpha(I_X) \geq 2$.
In addition,
if $s-1$ points of $X$ lie on a hyperplane, this implies that the minimum
distance
of $A(X)$ is $1$ since the $s-1$ columns corresponding to the points on
the hyperplane
span an $n$-dimensional vector space.  So, $1 = d(X) \geq \alpha(I_X) - 1
\geq 2-1$,
which gives the desired conclusion.

$(\Rightarrow)$  Suppose that $s' < s-1$ is
the maximum number of points of $X$ that lie on a hyperplane.
Let $H$ be the linear form defining this hyperplane.  By definition, $s
-d(X) = s'$.
Pick any two points of $X$ not in this hyperplane,
and let $L$ be any linear form that vanishes at these two points.
For any of the remaining $t = s - (s'+2) \geq 0$ points, let $L_i$ be any
linear form vanishing at that point.  Then $G = H\cdot L\cdot L_1\cdots
L_t$ is a form
in the ideal of the points of $X$.  Furthermore
$$\deg G = t+2 = s - s'
= s - (s- d(X)) = d(X)=\alpha(I_X) - 1.$$  We have a contradiction
since $(I_X)_i = (0)$ for all $i < \alpha(I_X)$.
\end{proof}

\begin{example}\label{example00} When $Z$ is not reduced, one can
attain the bound in Theorem \ref{hombound} as well. Let
$P_1=[0:1:0],P_2=[1:0:0],P_3=[1:1:0],P_4=[0:0:1]$ be four points in
$\proj{2}$. Consider $Z=2P_1+2P_2+P_3+P_4$. We have $\alpha(I_Z)=3$
and $m(Z)=2$.

We have \[A(Z) =
\begin{bmatrix}
0 & 0 & 1 & 1 & 1 & 0\\
1 & 1 & 0 & 0 & 1 & 0\\
0 & 0 & 0 & 0 & 0 & 1
\end{bmatrix},\] which has $d(Z)=1$.
\end{example}

\begin{example} Since there are no restrictions on the base field
$\K$, the statement in Theorem \ref{hombound} can have applications in
coding theory; it gives a lower bound for the minimum distance of
$A(Z)$. This lower bound does not depend on $d(X)$, where
$X=\operatorname{Supp}(Z)$, as Theorem \ref{crudebounds} does.

Furthermore, the lower bound of Theorem \ref{hombound} improves the
bound of Theorem \ref{crudebounds} in certain cases. For example, let
$X = \{P_1,\ldots,P_6\}$ be six points in $\mathbb{P}^2$ where
$P_4,P_5,P_6$ all line on a line, and none of $P_1,P_2$, or $P_3$ lie
on this line, and moreover, there is no line that passes through
these three points.  By our choice of points, $hyp(X) = 3$, and thus
$d(X) = 6 -3 = 3$.

Now consider the fat point scheme $Z=5P_1+5P_2+5P_3+P_4+P_5+P_6.$  We
have $m_1=m_2=m_3=m(Z)=5$, and
$m_4=m_5=m_6=1$. Theorem \ref{crudebounds} gives the lower bound $d(Z)
\geq m_4+m_5+m_6 =
3$. However, for this set of fat points, $\alpha(I_Z) = 9$, whence by
Theorem \ref{hombound} we have $d(Z) \geq \alpha(I_Z)-m(Z)=9-5=4$.
So, the lower bound of Theorem \ref{hombound} is better in this case.
\end{example}

%%%%%%%%%%%%%%%%%%%%%%%%%%%%%%%%%%%%%%%%%%%%%%%%%%%%%%%%%%%%%%%%%%%%%%%%%%%%%%%%%%%%%%%%%%%%5

\section{The minimum socle degree of a homogeneous fat point scheme}

Let $\K$ be a field of characteristic zero. Let
$Z=mP_1+\cdots+mP_s\subset\proj{n}$
be a homogeneous fat point scheme with
$X=\operatorname{Supp}(Z)=\{P_1,\ldots,P_s\}$
not contained in a hyperplane. With the notations above, in this
section we prove
one of the main results of the paper: $$s_n(Z)\leq md(X).$$
(Note that we assume $\K$ has characteristic zero so that we can
make use of a result found in \cite{gmv} on separators of fat points.
In particular, the result that we require from
\cite{gmv} is based upon a mapping cone construction of a
graded minimal free
resolution of $I_Z$;  the maps
that appear in this construction may change if we consider
a field of characteristic $p > 0$.  A careful analysis of
\cite{gmv} would be required to determine if the results of this section
still hold in non-zero characteristics.)

Before we prove this result, we will make some remarks on the results
obtained in the
previous section for the case of homogeneous fat points. First, as observed in
Theorem \ref{crudebounds}, $d(Z)=md(X)$. Second, the result in Theorem
\ref{hombound}
is immediate in this case: if $f\in I_X$ is of degree $\alpha(I_X)$,
then $f^m\in I_Z$,
and hence $m\alpha(I_X)\geq \alpha(I_Z)$. With $d(Z)=md(X)$ and
$d(X)\geq \alpha(I_X)-1$,
we indeed obtain that $d(Z)\geq \alpha(I_Z)-m$.

An interesting question remains: when is this bound attained? A simple
computation
 shows that the bound is attained whenever $d(X)=1$ and
$\alpha(I_Z)=m\alpha(I_X)$.
Now, looking at Example \ref{example00}, with $Z=mP_1+mP_2+mP_3+mP_4$,
when $m=2$ we
have $\alpha(I_Z)=4=2\alpha(I_X)$, whereas when $m\geq 3$,
$\alpha(I_Z)\leq 2m-1$.

In general there is no control on $\alpha(I_Z)$ as we vary $m$. As we
can see in the
second part of the example below, some ``random'' behavior happens in
general for
$s_n(Z)$ when $Z$ is a homogeneous fat points scheme.

\begin{example} \label{attainedbounds}
Let $X= \{P_1,P_2,P_3,P_4,P_5\} \subseteq \mathbb{P}^2$ where
$P_1 =[1:0:0],P_2 = [0:1:0],P_3 = [0:0:1],P_4 = [1:1:0],$ and $P_5 =
[1:3:1]$.
Set $Z=mP_1+mP_2+mP_3+mP_4+mP_5$, with $m\geq 1$. We have that
$d(X)=2\geq 2 =\alpha(I_X)$, and $d(Z)=md(X)=2m$.  For $m = 1,\ldots,7$,
we  calculate the minimum socle degree of $I_Z$:

\begin{center}
\begin{tabular}{c|c|c|c|c|c|c|c}
$m$&1&2&3&4&5&6&7\\
\hline
$s_2(Z)$&2&4&6&8&10&12&14
\end{tabular}
\end{center}

Let $X = \{P_1,P_2,P_3,P_4\} \subseteq \mathbb{P}^2$ where
$P_1=[1:0:0],P_2=[0:1:0],P_3=[0:0:1]$, and $P_4=[1:1:0]$, and set
$Z=mP_1+mP_2+mP_3+mP_4$ with $ m\geq 1$.
We have $d(X)=1=2-1=\alpha(I_X)-1$ and $d(Z)=md(X)=m$.
For $m =1,\ldots,7$, we calculate $s_2(Z)$:

\begin{center}\begin{tabular}{c|c|c|c|c|c|c|c}
$m$&1&2&3&4&5&6&7\\
\hline
$s_2(Z)$&1&3&5&6&8&10&11
\end{tabular}
\end{center}
\end{example}

Before we state and prove Theorem \ref{maintheorem}, we shall need the
notion of a separator of a fat point,
as found in \cite{gmv}.

\begin{definition}
Let $Z = m_1P_1 + \cdots +m_sP_s \subseteq \proj{n}$ be a
set of fat points, and suppose that $Z' = m_1P_1 + \cdots + (m_i-1)P_i
+ \cdots + m_sP_s$ for some $i =1,\ldots,s$.  (If $m_i = 1$, we simply
omit the point $P_i$ in $Z'$.)  We call $F \in R = \K[x_0,\ldots,x_n]$
a {\it separator of $P_i$ of multiplicity $m_i$} if
$F \in I_{Z'} \setminus I_Z.$
\end{definition}

When all of the $m_i$s equal one in the above definition, we recover
the definition of a separator of a reduced point as found in
\cite{abm,b,gmr}.  We now apply \cite[Theorem 5.4]{gmv}.

\begin{theorem}\label{fatpointsocle}
Let $Z = m_1P_1 + \cdots +m_sP_s \subseteq \proj{n}$ be a
set of fat points, and suppose that $Z' = m_1P_1 + \cdots + (m_i-1)P_i
+ \cdots + m_sP_s$ for some $i =1,\ldots,s$.  Let $F$ be
any separator of $P_i$ of multiplicity of $m_i$ of smallest degree,
i.e., if $F'$ is any other separator of $P_i$ of multiplicity
$m_i$, then $\deg F' \geq \deg F$.  Then
\[\deg F \geq s_n(Z).\]
\end{theorem}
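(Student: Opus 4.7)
The plan is to deduce this from \cite[Theorem 5.4]{gmv} by interpreting a minimal degree separator as a minimal socle generator after quotienting by a general linear non-zero divisor. The basic exact sequence to work with is
\[ 0 \to I_{Z'}/I_Z \to R/I_Z \to R/I_{Z'} \to 0, \]
and the point is that $I_{Z'}/I_Z$ is a torsion module supported at $P_i$ whose minimum degree element is precisely a minimal degree separator $F$ of $P_i$ of multiplicity $m_i$.

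First I would describe $I_{Z'}/I_Z$ homologically: because its support is the single point $P_i$, it is Artinian after saturation, and its minimal free resolution has length $n+1$, with a generator in degree $\deg F$. A mapping cone construction on the short exact sequence above, using the minimal free resolutions of $R/I_{Z'}$ and of $I_{Z'}/I_Z$, then produces a (possibly non-minimal) free resolution of $R/I_Z$. In this mapping cone, the generator $R(-\deg F)$ of the resolution of $I_{Z'}/I_Z$ propagates to a summand $R(-\deg F - n)$ in the last free module of the resolution of $R/I_Z$.

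The main obstacle is the standard difficulty with mapping cones: the resulting resolution need not be minimal, and summands can cancel against each other when one passes to the minimal resolution. Precisely this point is addressed by \cite[Theorem 5.4]{gmv}, which shows that the minimality of $\deg F$ among separators of $P_i$ of multiplicity $m_i$ guarantees that the corresponding summand $R(-\deg F - n)$ in $F_n$ does not cancel. Equivalently, via the socle interpretation recalled in the remark following the definition of $s_n(Z)$, the image of $F$ in $R/(I_Z, L)$, for a general linear non-zero divisor $L$, is a non-zero socle element of degree $\deg F$, and hence a minimal socle generator.

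Granting this, the conclusion is immediate: there exists $j$ with $a_{j,n} = \deg F + n$, so $t_n = \min_j a_{j,n} \leq \deg F + n$, and therefore
\[ s_n(Z) = t_n - n \leq \deg F. \]
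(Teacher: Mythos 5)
Your proposal is correct and follows essentially the same route as the paper: both arguments reduce the statement to \cite[Theorem 5.4]{gmv}, which guarantees that for a minimal generator of $I_{Z'}/I_Z$ of degree $\deg F$ the shift $\deg F + n$ genuinely survives into the last module of the minimal free resolution of $I_Z$, whence $t_n \leq \deg F + n$ and $s_n(Z) \leq \deg F$. The extra exposition you give about the mapping cone and the possible cancellation of summands is precisely the content that the paper (and you) delegate to the cited result, so there is no substantive difference.
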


\begin{proof}  Note that \cite[Theorem 5.4]{gmv} actually proves
something stronger: if $\overline{G}$ is any
minimal generator of the $R$-module $I_{Z'}/I_Z$, then $\deg G + n$
appears
as a shift  in the last module in the graded minimal free resolution of
$I_Z$.  Because
$\overline{F}$ will be a minimal generator of $I_{Z'}/I_Z$
of smallest degree, $\deg F + n$ will appear as
a shift in the last module in the graded minimal free resolution, and thus
$\deg F +n - n \geq
s_n(Z).$
\end{proof}

We need one other result.
In Remark \ref{geometric}, $hyp(X)$ denotes the maximum number of points
of
a reduced set of points $X$ contained in some hyperplane. To obtain the
maximum number of points of $X$ contained in some hypersurface of degree
$a$,
by \cite{mp}, one should compute $hyp(v_a(X))$, where $v_a$ is the
Veronese embedding
of degree $a$ of $\mathbb P^n$ into $\mathbb P^{N_a}$, where $N_a=
{{n+a}\choose{a}}-1$.
Let us denote
\[d(X)_a=|X|-hyp(v_a(X)).\]
Observe that $d(X)_1=d(X)$.

\begin{remark}
As an aside,
$d(X)_a$ is the minimum distance of the evaluation code $\mathcal C(X)_a$
(see \cite{h,tvn} for more details).
However, we will not need this interpretation.
\end{remark}

The following lemma will then constitute a key tool needed to prove our
main result:

\begin{lemma}[{\cite[Proposition 2.1]{t1}}]\label{lem:recursion}
Let $X = \{P_1,\ldots,P_s\} \subseteq \proj{n}$ be a set of distinct
reduced points.
If $d(X)_b\geq 2$ for some $b$, then for all
$1\leq a\leq b-1$, we have $d(X)_a\geq d(X)_{a+1}+1$ and therefore
$d(X)_a\geq b-a+2$.
\end{lemma}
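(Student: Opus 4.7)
The plan is to recast the inequality in geometric language using the fact that $d(X)_a = s - h_a$, where $h_a := hyp(v_a(X))$ is the largest number of points of $X$ that lie on a degree-$a$ hypersurface not containing all of $X$. With this dictionary, $d(X)_a \geq d(X)_{a+1}+1$ is equivalent to $h_{a+1} \geq h_a + 1$. I will prove this by downward induction on $a$, from $a = b-1$ down to $a = 1$; the second assertion $d(X)_a \geq b - a + 2$ then follows immediately by iterating the first down to the hypothesis $d(X)_b \geq 2$.

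The heart of the argument is the following multiply-by-a-linear-form step, available whenever $d(X)_a \geq 2$. Let $F$ be a degree-$a$ form realizing $h_a$, and put $Y := X \setminus V(F)$, so that $|Y| = d(X)_a \geq 2$. Pick distinct points $Q,Q' \in Y$ and a linear form $L$ vanishing at $Q$ but not at $Q'$. Then $FL$ is a degree-$(a+1)$ form vanishing on $(X \cap V(F)) \cup \{Q\}$, a set of at least $h_a+1$ points of $X$, while $Q' \notin V(FL)$. Hence $FL$ certifies that $h_{a+1} \geq h_a + 1$, giving $d(X)_a \geq d(X)_{a+1}+1$.

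To keep the side condition $d(X)_a \geq 2$ running, I will use the auxiliary reduction: $d(X)_{a+1} \geq 2$ implies $d(X)_a \geq 2$. Indeed, if $d(X)_a = 1$ then some degree-$a$ form $F$ vanishes on exactly $X \setminus \{Q\}$ for a single point $Q$; multiplying by any linear form $L$ that does not vanish at $Q$ produces a degree-$(a+1)$ form whose zero locus meets $X$ in exactly $X \setminus \{Q\}$, forcing $d(X)_{a+1} \leq 1$, a contradiction.

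Assembling: at the base $a = b-1$, the hypothesis $d(X)_b \geq 2$ triggers the reduction to give $d(X)_{b-1} \geq 2$, and the core step then yields $d(X)_{b-1} \geq d(X)_b + 1$. In the inductive step, the induction hypothesis supplies $d(X)_{a+1} \geq b - a + 1 \geq 2$, so the reduction and then the core step again apply, completing the induction. The main obstacle, modest as it is, is the bookkeeping of the "$\geq 2$" side condition across the induction; the short reduction above is precisely what disposes of it uniformly.
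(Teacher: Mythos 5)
Your proof is correct and follows essentially the same route as the paper's own geometric argument (given in the remark after the lemma): multiply the degree-$a$ form realizing $hyp(v_a(X))$ by a linear form through one of the remaining points to force an extra point onto a degree-$(a+1)$ hypersurface. You are in fact somewhat more careful than the paper's sketch, since you explicitly track the side condition $d(X)_a\geq 2$ needed to guarantee the product form does not vanish on all of $X$ and hence genuinely certifies $hyp(v_{a+1}(X))\geq hyp(v_a(X))+1$.
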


\begin{remark} Here is an intuitive, geometrical proof of
the above lemma. Suppose that $s - d(X)_a$ points of $X$ lie on a
hypersurface of degree $a$.
Then we need to have at least
$s-d(X)_a+1$ points lying on a hypersurface of degree $a+1$.  Indeed,
if you take the hypersurface $V(F)$ of degree $a$ containing the
$s-d(X)_a$
points, and any hyperplane $V(L)$ through one of the remaining points,
then the hypersurface $V(F\cdot L)$ of degree $a+1$ will contain
$s-d(X)_a+1$ points.
So, $d(X)_{a+1} \leq s -  (s-d(X)_a+1)$, i.e., $d(X)_{a+1} +1 \leq
d(X)_a$.
\end{remark}

We come to our main theorem.

\begin{theorem}\label{maintheorem}
Let $Z = mP_1 + \cdots +mP_s
\subseteq \proj{n}$ be a homogeneous set of fat points with
$X = \operatorname{Supp}(Z)$ not contained in a hyperplane.
\begin{enumerate}
\item[$(i)$] If $d(X) \geq \alpha(I_X)$, then $s_n(Z)\leq md(X)$.
\item[$(ii)$] Otherwise, if $d(X) =  \alpha(I_X) -1$, then $s_n(Z)\leq 2m-1.$
\end{enumerate}
\end{theorem}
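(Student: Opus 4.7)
The plan is to invoke Theorem~\ref{fatpointsocle} in each case by producing an explicit separator of multiplicity $m$ whose degree matches the claimed upper bound.

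For case~(ii), Theorem~\ref{boundscor} applied to the hypothesis $d(X)=\alpha(I_X)-1$ yields a hyperplane $V(H)$ containing $s-1$ points of $X$; let $P_1$ be the remaining point and pick any linear form $L\in(I_{P_1})_1$. Then the form
$$\Phi := H^m L^{m-1}$$
has degree $2m-1$, vanishes to order $\geq m$ at every $P_j$ with $j\neq 1$ (since $H$ does), and vanishes at $P_1$ to order exactly $m-1$ (since $H(P_1)\neq 0$ while $L$ vanishes there to order one). So $\Phi$ is a separator of $P_1$ of multiplicity $m$, and Theorem~\ref{fatpointsocle} gives $s_n(Z)\leq 2m-1$.

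For case~(i), Lemma~\ref{lem:recursion} applied with $d(X)_1=d(X)\geq\alpha(I_X)\geq 2$ forces $d(X)_{d(X)}\leq 1$, which produces a point $P_1\in X$ and a form $G\in(I_{X\setminus P_1})_{d(X)}$ with $G(P_1)\neq 0$. Since $\alpha(I_X)\leq d(X)$, the space $(I_X)_{\alpha(I_X)}$ is nonzero, and the construction is to choose $F\in(I_X)_{\alpha(I_X)}$ vanishing at $P_1$ to order exactly one and then set
$$\Phi := F^{m-1} G,$$
of degree $(m-1)\alpha(I_X)+d(X)\leq md(X)$. The vanishing orders satisfy $\nu_{P_j}(\Phi)\geq m$ for $j\neq 1$ and $\nu_{P_1}(\Phi)=m-1$, so $\Phi$ is a separator of $P_1$ of multiplicity $m$, and Theorem~\ref{fatpointsocle} delivers $s_n(Z)\leq md(X)$.

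The main obstacle is securing the form $F$ with $\nu_{P_1}(F)=1$ at a point $P_1$ compatible with the chosen $G$. For each $P\in X$ the set $S_P=\{F\in(I_X)_{\alpha(I_X)}:\nu_P(F)\geq 2\}$ is a proper linear subspace of $(I_X)_{\alpha(I_X)}$ provided $\alpha(I_{2P+(X\setminus P)})>\alpha(I_X)$, and over an infinite field the finite union $\bigcup_P S_P$ cannot exhaust the whole space unless the degenerate equality $\alpha(I_{2X})=\alpha(I_X)$ holds; to rule this out one can compare Hilbert functions using that $d(X)\geq\alpha(I_X)$ already dominates $s_n(X)=\mathrm{reg}(R/I_X)$ by the reduced case, so $h_{R/I_X}$ has saturated to $|X|$ by degree $\alpha(I_X)$ while $2P+(X\setminus P)$ has strictly greater length. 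The residual degenerate sub-case is expected to be handled separately by exploiting a form $F_0\in(I_{2X})_{\alpha(I_X)}$ whose powers provide strong vanishing at every point of $X$ and allow a lower-degree separator to be assembled well within the claimed bound.
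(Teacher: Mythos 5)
Your overall route is the same as the paper's: both parts are handled by exhibiting an explicit separator of $P_1$ of multiplicity $m$ and invoking Theorem \ref{fatpointsocle}. Your case (ii) is verbatim the paper's argument, and your case (i) product $F^{m-1}G$ is the paper's $F\cdot G^{m-1}$ with the roles of the letters swapped. However, two steps in case (i) do not hold up as written. First, the deduction ``$d(X)_1\geq 2$ forces $d(X)_{d(X)}\leq 1$, which produces a point $P_1$ and a form $G\in(I_{X\setminus P_1})_{d(X)}$ with $G(P_1)\neq 0$'' fails: since $\alpha(I_X)\leq d(X)$ in case (i), there is a hypersurface of degree $d(X)$ through \emph{all} of $X$, so $hyp(v_{d(X)}(X))=|X|$ and $d(X)_{d(X)}=0$; the hyperplane section of the Veronese realizing this maximum contains every point and separates nothing. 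The paper runs Lemma \ref{lem:recursion} in the opposite direction: it sets $\delta=\min_i\delta_i$ to be the least degree of a reduced separator of some point, observes $d(X)_{\delta-1}\geq 2$ (a hypersurface of degree $\delta-1$ through all but one point would contradict minimality of $\delta$), and concludes $d(X)=d(X)_1\geq\delta$ with $b=\delta-1$, $a=1$. That inequality is what you actually need, and your derivation of it does not give it.

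Second, the requirement that the degree-$\alpha(I_X)$ form vanish to order exactly one at $P_1$ --- so that $F^{m-1}G$ has order exactly $m-1$ there and genuinely lies outside $I_Z$ --- is a real issue, and to your credit you flag it (the paper simply asserts $F\cdot G^{m-1}\in I_W\setminus I_Z$ without comment; the assertion needs exactly the hypothesis you identify). But your treatment is not a proof: the linear-algebra/Hilbert-function sketch only shows each $S_P$ is proper under an extra hypothesis, and the ``residual degenerate sub-case'' that you say ``is expected to be handled separately'' is nonempty --- for instance, for nine general points on an irreducible nodal cubic $V(G)\subseteq\proj{2}$ one has $(I_X)_3=\langle G\rangle$ one-dimensional with $G$ vanishing to order two at the node, so if the separating point is taken to be the node the construction lands inside $I_Z$. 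The fix (choosing the separating point so that some minimal-degree form is smooth there, while still controlling $\delta$) is plausible but is precisely the content you have omitted. As submitted, case (i) is therefore incomplete at the one place where it goes beyond what the paper itself writes down.
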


\begin{proof}
From Theorem \ref{crudebounds}, we have that $d(Z)=md(X)$.

For each $P_i \in X$, let
\[\delta_i = \min\{\deg F ~|~ F(Q) = 0 ~~\mbox{for each $Q \in X
\setminus\{P_i\}$ but $F(P_i) \neq 0$}\}.\]
Let $\delta = \min_{i=1}^s\{\delta_i\}$ be the minimum degree.
After relabeling, we may
assume that $\delta=\delta_1$. Let $X'=X\setminus\{P_1\}$ and let
$F \in I_{X'}\setminus I_X$ be a separator of degree $\delta$.

Let $G \in (I_X)_{\alpha(I_X)}$. Then $$F\cdot G^{m-1} \in I_W\setminus
I_Z,$$
where $W=(m-1)P_1+mP_2+\cdots+mP_s$.   In other words, $F\cdot G^{m-1}$ is
separator
of $P_1$ of multiplicity $m$.
So $$\delta+(m-1)\alpha(I_X)\geq\Delta(Z),$$
where $\Delta(Z)$ is the smallest degree of
a separator of $P_1$ of multiplicity $m$.
By Theorem \ref{fatpointsocle}, we then have
$$\delta + (m-1)\alpha(I_X) \geq \Delta(Z)\geq s_n(Z).$$

$(i)$ If $d(X)\geq \alpha(I_X)$, then
$\delta\geq 2$. Otherwise, if $\delta=1$, then $s-1$ points of $X$ will
lie in a
hyperplane. But by Corollary \ref{boundscor}, this can only happen if
$d(X) = \alpha(I_X)-1$.

Also, $d(X)_{\delta-1} \geq 2$.  If $d(X)_{\delta-1} \leq 1$, then there
is a hypersurface
of degree $\delta-1$ that contains either all the points of $X$, or all
but one point of $X$.  But this would contradict our choice of
$\delta$; it is the smallest degree of a form that passes through
all the points of $X$ except one.
So, by Lemma \ref{lem:recursion}  with $b=\delta-1$ and $a=1$, we have
$d(X)\geq \delta$.  With this fact, and since $d(X)\geq \alpha(I_X)$,
we obtain
$$d(Z)=md(X)=d(X)+(m-1)d(X)\geq \delta+(m-1)\alpha(I_X) \geq\Delta(Z)\geq
s_n(Z).$$

$(ii)$  If $d(X) = \alpha(I_X)-1$,
then $s-1$ points of $X$ lie on a hyperplane by Corollary \ref{boundscor}.
As shown
in the proof of this corollary, this also implies $d(X) = 1$.
Let $V(H)$ be the hyperplane through the $s-1$ points,
and let $L$ be a linear form that vanishes at the remaining point off the
hyperplane (say $P_1$).
Then
\[H^m\cdot L^{m-1} \in I_W \setminus I_Z\]
where $W = (m-1)P_1+mP_2 + \cdots + mP_s.$  Hence, $H^m\cdot L^{m-1}$
is a separator of $P_1$ of multiplicity $m$.
Thus, by Theorem \ref{fatpointsocle},
\[2m-1 \geq s_n(Z).\] \end{proof}

\begin{remark} Looking at Example \ref{attainedbounds}, observe that the first part of this example gives that for each $m =1,\ldots,7$ the lower bound of Theorem
\ref{maintheorem}(i) is attained. The second part shows that for $m=1,2$ and $3$ the lower bound of Theorem \ref{maintheorem}(ii) is also attained.
\end{remark}

We end this section with a question based upon our results.

\begin{question} Can we generalize the lower bound of Theorem
\ref{maintheorem} to
non-homogeneous fat points?   Is it true that $d(Z)\geq s_n(Z)-m(Z)+1,$
where
$m(Z)$ is the maximum multiplicity of a point in $Z$, for any $Z$?
In other words, because $s_n(Z) \geq \alpha(I_Z)-1$,
can Theorem \ref{hombound} be improved to $d(Z) \geq s_n(Z) - m(Z) +1 \geq
\alpha(I_Z) -m(Z)$?
\end{question}

%%%%%%%%%%%%%%%%%%%%%%%%%%%%%%%%%%%%%%%%%%%%%%%%%%%%%%%%%%%%%%%%%%%%%%%%%%%%%%%%%%%%%%%%%%%

\section{A case study: complete intersections}

Reduced matrices of the from $A(X)$ were studied by Hansen \cite{h} and
Gold, Little, and Schenck \cite{gls}.  In both cases, the
authors focused on the case that the associated set of reduced points $X$
was
a complete intersection. (Their results were later generalized in
\cite{t1} to case that $X$ was Gorenstein, and in \cite{t3} to the case that $X$
was any reduced set of points.)
Building upon their work, we consider
matrices of the form $A(Z)$ when the support of the fat points
$Z$ is a complete intersection.

Recall that a set of points of $X \subseteq \proj{n}$
is a {\it complete intersection of type $(d_1,\ldots,d_n)$}
if there exists a regular sequence of homogeneous forms
$F_1,\ldots,F_n \in R$
with $\deg F_i = d_i$ such that $I_X = (F_1,\ldots,F_n)$.
We usually denote $X$ by $CI(d_1,\ldots,d_n)$.
Because the $F_i$'s defining a complete intersection are
homogeneous, any permutation of the $F_i$'s is also a complete
intersection.
So, we can make the assumption that $d_1 \leq d_2 \leq \cdots \leq d_n$.
Because we are interested in the case that $A(X)$ has full rank,
we can also assume that $2 \leq d_1$.  If $d_1 =1$, then the set
of points $X$ would be contained in a hyperplane.

Recall that the theme of this paper is to study the shifts in the
graded minimal free resolution in terms of the minimum distance of
$A(Z)$. While some bounds can be found, the following
example shows that this will not be enough.

\begin{example}\label{ci} Consider the following two sets of points
$X_1,X_2\subseteq \proj{2}$, both examples of complete intersections
of the form $CI(2,3)$.  In the first case
the conic is irreducible, while in the second case the conic is reducible.
\begin{center}
\epsfig{file=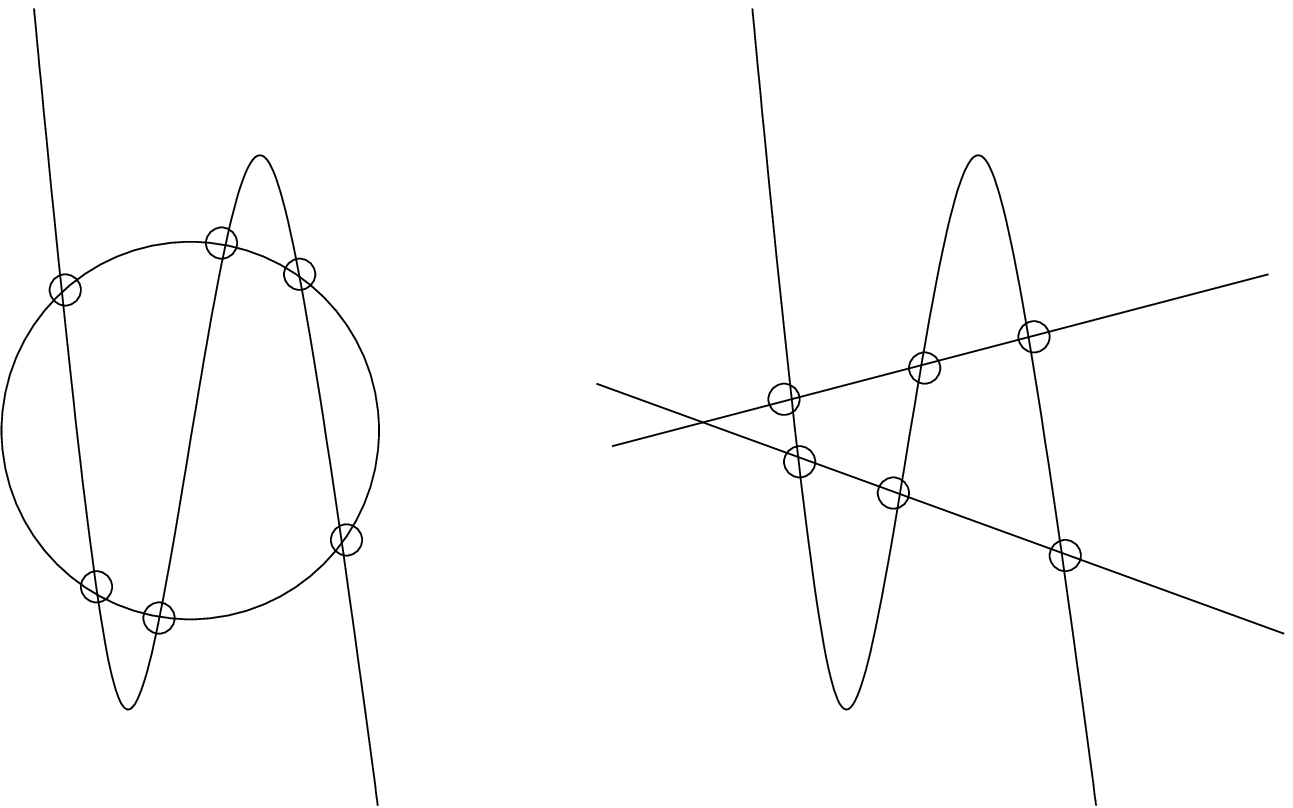,height=2in,width=5in}
\end{center}
The graded minimal free resolutions of $R/I_{X_1}$ and
$R/I_{X_2}$ are the same, i.e.,
$$0\rightarrow R(-5)\rightarrow R(-2)\oplus R(-3)\rightarrow R\rightarrow
R/I\rightarrow 0
~~\mbox{with $I=I_{X_1}$ or $I_{X_2}$.}$$

We have $s_2(X_1)=s_2(X_2)=5-2=3$. If $A_1$ and $A_2$ are the
corresponding
matrices (i.e., the columns of these matrices will be given by the
homogeneous
coordinates in the algebraic closure of $\K$ of the points), we see from
Remark \ref{geometric}, that $d(A_1)=6-2=4$, and $d(A_2)=6-3=3$.

Indeed the bound in Theorem \ref{maintheorem}, with $m=1$, is satisfied
for
both cases, that is, $d(A_i)\geq s_2(A_i)$ for $i=1,2$.  But this example
also shows that
one cannot rely on the graded minimal free resolution
alone to find the minimum distance.
\end{example}

\subsection{Homogeneous fat points with complete intersection support}
As shown in Theorem \ref{maintheorem}, we can bound $d(Z)$ in terms of
$s_n(Z)$.  In the case that $\operatorname{Supp}(Z)$ is a complete
intersection, we can get an explicit value for $s_n(Z)$ when all the
multiplicities are equal.

\begin{lemma}\label{soclevalueci}
Let $Z = mP_1 + \cdots + mP_s \subseteq \proj{n}$ be a set of fat
points with $\operatorname{Supp}(Z) = CI(d_1,d_2,\ldots,d_n)$.
Then
\[s_n(Z) = md_1+d_2+d_3+ \cdots + d_n - n.\]
\end{lemma}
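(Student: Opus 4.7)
My plan is to reduce the computation to an Artinian situation and then exploit the complete-intersection structure of the associated graded ring of $I_X$. Write $I_X=(F_1,\ldots,F_n)$ with $\deg F_i=d_i$ and (by the convention already adopted in the paper) $d_1\leq \cdots\leq d_n$. I would first show $I_Z=I_X^m$: since $I_X$ is a regular sequence of maximal height, $R/I_X^m$ is Cohen--Macaulay of Krull dimension one, so $I_X^m$ is saturated; and at each associated prime $\mathfrak{p}_i=I_{P_i}$, both $I_X^m$ and $I_Z$ localise to $(\mathfrak{p}_i R_{\mathfrak{p}_i})^m$. Two saturated ideals agreeing at all their associated primes are equal.

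Next, I would reduce to an Artinian quotient using the remark from the introduction: pick a general linear form $L$, a non-zero-divisor on $R/I_Z$. Then $s_n(Z)$ equals the smallest socle degree of $A:=R/(I_Z,L)$. Writing $R'=R/(L)$ and $J=I_X R'=(F'_1,\ldots,F'_n)$ with $F'_i=F_i\bmod L$, this becomes $A=R'/J^m$; for generic $L$ the sequence $F'_1,\ldots,F'_n$ remains regular in $R'$. So $B:=R'/J$ is an Artinian Gorenstein complete intersection with one-dimensional socle in degree $\sigma:=d_1+\cdots+d_n-n$.

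The core of the argument is to compute $\operatorname{soc}(R'/J^m)$ explicitly. Because $J$ is a regular sequence one has the Koszul-style identity $(J^m:J)=J^{m-1}$, so $\operatorname{soc}(R'/J^m)=\operatorname{soc}(J^{m-1}/J^m)$. The defining property of a complete intersection is that $\operatorname{gr}_J(R')$ is a polynomial ring over $B$, which gives $J^{m-1}/J^m\cong \operatorname{Sym}_B^{m-1}(J/J^2)$; combined with the Koszul-based fact that $J/J^2\cong\bigoplus_i B(-d_i)$ is $B$-free on the $\bar F'_i$, I obtain
\[
J^{m-1}/J^m \;\cong\; \bigoplus_{\substack{a_1+\cdots+a_n=m-1\\ a_i\geq 0}} B(-a_1 d_1 - \cdots - a_n d_n).
\]
Taking socles summand by summand yields socle degrees of $R'/J^m$ exactly $\sigma+\sum_i a_i d_i$.

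Finally, minimising $\sum_i a_i d_i$ subject to $\sum a_i=m-1$, $a_i\geq 0$, gives $(m-1)d_1$, attained at $a=(m-1,0,\ldots,0)$. Hence $s_n(Z)=\sigma+(m-1)d_1=md_1+d_2+\cdots+d_n-n$. The step I expect to require the most care to justify cleanly is the identification $J^{m-1}/J^m\cong \operatorname{Sym}_B^{m-1}(J/J^2)$: this is precisely where the CI hypothesis enters, since for a non-complete-intersection $J$ the natural surjection $\operatorname{Sym}_B^{\bullet}(J/J^2)\to\operatorname{gr}_J(R')$ would fail to be an isomorphism and the socle count in the previous paragraph would change.
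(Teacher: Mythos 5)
Your proof is correct, but it takes a genuinely different route from the paper. The paper's entire argument is a citation: after noting $I_Z=I_X^m$, it invokes \cite[Theorem 2.1]{gv}, which records the graded minimal free resolution (in particular the last shifts) of powers of complete intersections, and reads off $s_n(Z)$ from that. You instead give a self-contained computation: reduce to the Artinian quotient $R'/J^m$, use $(J^m:J)=J^{m-1}$ to trap the socle inside $J^{m-1}/J^m$, and then use the complete-intersection identifications $J/J^2\cong\bigoplus_i B(-d_i)$ and $J^{m-1}/J^m\cong\operatorname{Sym}^{m-1}_B(J/J^2)$ to see that the socle degrees are exactly $\sigma+\sum_i a_id_i$ over compositions $a$ of $m-1$, minimised at $(m-1)d_1$. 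All the steps check out: $F'_1,\ldots,F'_n$ is automatically a regular sequence in $R'$ because $J$ has height $n$ there; the colon identity and the freeness of $\operatorname{gr}_J(R')$ over $B$ are exactly the standard consequences of the CI hypothesis that you flag; and your verification that $I_Z=I_X^m$ (via unmixedness of $R/I_X^m$ and localisation at the $\mathfrak p_i$) supplies a justification the paper only asserts. What your approach buys is independence from \cite{gv} and, as a bonus, the full multiset of socle degrees (equivalently the entire last free module in the resolution), not just the minimum; what the paper's approach buys is brevity, since the needed Betti numbers are already in the literature.
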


\begin{proof}
The defining ideal of $I_Z$ is $I_X^m$ with $X = \operatorname{Supp}(Z)$.
But $I_X^m$ is  a power of a complete intersection, so
one can use the formula of \cite[Theorem 2.1]{gv}.
\end{proof}

If we want to see for what such $Z$ are the bounds in
Theorem \ref{maintheorem} attained, we obtain

\begin{theorem}  Let $X = CI(d_1,\ldots,d_n) \subseteq \mathbb{P}^n$
with $n \geq 2$,
and let $Z$ be the
homogeneous
set of fat points of multiplicity $m$ whose support is $X$.  Then
\[md(X) = s_n(Z)\mbox{ if and only if } X=CI(2,2).\]
\end{theorem}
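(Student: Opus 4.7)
The plan is to combine the explicit formula $s_n(Z) = md_1 + d_2 + \cdots + d_n - n$ from Lemma \ref{soclevalueci} with a B\'ezout-type lower bound on $d(X)$. Rewriting $md(X) = s_n(Z)$ as
\[m\bigl(d(X) - d_1\bigr) = d_2 + \cdots + d_n - n,\]
the problem becomes a purely numerical one once $d(X)$ is bounded below.

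The B\'ezout step: for any linear form $L$, the scheme $V(L) \cap V(F_2) \cap \cdots \cap V(F_n)$ has length at most $1 \cdot d_2 d_3 \cdots d_n$ by B\'ezout's theorem, and it contains $X \cap V(L)$. Hence $hyp(X) \leq d_2 \cdots d_n$, and Remark \ref{geometric} gives $d(X) \geq (d_1 - 1)d_2 \cdots d_n$. Substituting into the displayed identity yields the key inequality
\[m\bigl[(d_1 - 1) d_2 \cdots d_n - d_1\bigr] \leq d_2 + \cdots + d_n - n. \qquad (\ast)\]

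The core of the proof is a numerical analysis of $(\ast)$ under the standing assumption $2 \leq d_1 \leq d_2 \leq \cdots \leq d_n$. For $n \geq 3$, the left-hand side of $(\ast)$ is bounded below by $(d_1 - 1)2^{n-1} - d_1$, which grows exponentially in $n$, whereas the right-hand side grows only linearly in the $d_i$; a direct comparison shows $(\ast)$ fails for $d_i \geq 2$. For $n = 2$, writing $a = d_1 - 1$ and $b = d_2 - 1$ reduces $(\ast)$ to $m(ab - 1) \leq b - 1$ with $1 \leq a \leq b$; this forces $ab = 1$ and hence $d_1 = d_2 = 2$. The main anticipated obstacle lies in the $m = 1$ boundary, where the inequality $m(ab-1) \leq b - 1$ permits $a = 1$ with any $b$, corresponding to $CI(2, k)$ attained e.g.\ by a reducible defining conic; I expect the theorem's intended scope is $m \geq 2$, or else one must refine the bound using separator information from Section 3 to rule out these configurations.

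For the converse, suppose $X = CI(2, 2)$. Then no three of the four points can be collinear: if $L$ were a line through three of them, applying B\'ezout to $V(L) \cap V(F_i)$ would force $L$ to be a component of each quadratic generator $F_i$ of $I_X$, so that $I_X \subseteq (L)$, contradicting the fact that $I_X$ contains no linear forms. Hence $hyp(X) = 2$, which gives $d(X) = 4 - 2 = 2$ by Remark \ref{geometric}, and therefore $md(X) = 2m = s_n(Z)$ by Lemma \ref{soclevalueci}, as desired.
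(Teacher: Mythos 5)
Your forward direction hinges on the lower bound $d(X)\geq (d_1-1)d_2\cdots d_n$, and the way you derive it does not work for $n\geq 3$. You apply B\'ezout's Theorem to $V(L)\cap V(F_2)\cap\cdots\cap V(F_n)$ to conclude its length is at most $d_2\cdots d_n$, but this requires the intersection to be proper (equivalently, that $L,F_2,\ldots,F_n$ form a regular sequence); if the curve $V(F_2,\ldots,F_n)$ has an irreducible component lying inside the hyperplane $V(L)$, the intersection is positive-dimensional and no such length bound is available, so you cannot conclude $hyp(X)\leq d_2\cdots d_n$. This is exactly the obstruction the paper confronts: the inequality $d(X)\geq(d_1-1)d_2\cdots d_n$ is proved only when some $X_j$ has no component in a hyperplane (Corollary \ref{bezout}) or when $n=2$ (Theorem \ref{n=2}), and in general it is left open as Conjecture \ref{conjecture}. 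So for $n\geq 3$ your inequality $(\ast)$ is unsupported, even though the numerical analysis you would do with it is sound. The paper's proof avoids this entirely by using only the weak but proven bound $d(X)\geq s_n(X)=d_1+\cdots+d_n-n$ (Theorem \ref{maintheorem} with $m=1$, after first ruling out the case $d(X)=\alpha(I_X)-1$ by combining Theorem \ref{maintheorem}(ii) with Lemma \ref{soclevalueci}): writing $U=s_n(Z)$ and $V=ms_n(X)$, the hypothesis $md(X)=s_n(Z)$ forces $U\geq V$, and the identity $U=(m-1)d_1+V/m$ reduces this to $n\geq d_2+\cdots+d_n$, hence to $n=2$ and $d_1=d_2=2$. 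You should replace your B\'ezout step with that argument.

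Two smaller points. Your converse direction is correct (it is essentially the argument of Theorem \ref{n=2} specialized to $CI(2,2)$). And your worry about the $m=1$ boundary is well founded rather than a defect of your write-up: the paper's own passage from $m(m-1)d_1\geq(m-1)V$ to $md_1\geq V$ silently divides by $m-1$, and Example \ref{ci} (the reducible conic in $CI(2,3)$, where $d(X)=3=s_2(X)$) shows the stated equivalence genuinely fails at $m=1$; so the theorem must be read with $m\geq 2$, exactly as you suspected.
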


\begin{proof} First, we show that we can exclude the second part of
Theorem \ref{maintheorem} from consideration.  Indeed, suppose
that  $d(X)=\alpha(I_X)-1$. By the proof of Corollary \ref{boundscor},
we must have $\alpha(I_X)=d_1=2$, and thus Theorem \ref{soclevalueci}
implies  that $s_n(Z) = 2m + d_2 + \cdots + d_n - n$. From Theorem \ref{maintheorem}, we have $s_n(Z)\leq 2m-1$, and therefore, $d_2+\cdots +d_n-n \leq -1$. But $2= d_1 \leq d_2 \leq \cdots \leq d_n$ implies that
$2(n-1)-n \leq d_2 + \cdots + d_n-n \leq -1$, thus giving us $n-2 \leq -1$,
that is, $n \leq 1$, thus giving the contradiction. Thus, we can assume that $d(X) \geq \alpha(I_X) = d_1$.

Suppose $s_n(Z)=md(X)$. From Theorem \ref{maintheorem},
$m=1$, we have $d(X)\geq s_n(X)=d_1+\cdots+d_n-n$ and,
from Lemma \ref{soclevalueci}, $s_n(Z)=md_1+d_2+\cdots+d_n-n$.

Denote $s_n(Z) = U$ and $ms_n(X) = V$.
 We then have $U = (m-1)d_1+V/m$.
Now we will have
$U \geq V$ if and only if $(m-1)d_1+V/m \geq V$.
But this inequality is equivalent to
  \[m(m-1)d_1 \geq (m-1)V.\]
So $md_1 \geq V$. But $V = ms_n(X)$, so
$d_1 \geq s_n(X) = d_1+d_2+\cdots+d_n-n$.
We thus get $U \geq V$ if and only if $n \geq d_2+\cdots+d_n$.
But $2 \leq d_2 \leq \cdots \leq d_n$, so we have
$n \geq 2(n-1)$, i.e., $2 \geq n$.  But this forces
$n =2, d_2 = 2,$ and $d_1=2$.

Thus, unless $X = CI(2,2)$, we have  $ms_n(X) > s_n(Z)$, and therefore
$md(X)\neq s_n(Z)$.

\vskip .1in

If $X = CI(2,2)$, then $s_2(Z) = m\cdot2+2-2 = 2m$.  Since $d(X)=2$ we
have indeed that $s_2(X)=md(X)$.
\end{proof}

Lemma \ref{soclevalueci} also lets us recover a result of Gold,
Little, and Schenck (\cite{gls}) as a corollary;
their result is the case when all the multiplicities equal one.

\begin{corollary}\label{cibound}
Let $X = \{P_1,\ldots,P_s\} \subseteq \proj{n}$ be a reduced set of points.
If $X = CI(d_1,\ldots,d_n)$, then
\[d(X) \geq  d_1+d_2+d_3+ \cdots + d_n - n.\]
\end{corollary}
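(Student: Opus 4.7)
The plan is to reduce the corollary to a direct application of Theorem \ref{maintheorem} and Lemma \ref{soclevalueci}, both specialized to $m=1$ (so $Z=X$).

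First, I would apply Lemma \ref{soclevalueci} with $m=1$ to obtain the explicit formula
$$s_n(X) = d_1 + d_2 + \cdots + d_n - n.$$
This converts the desired bound $d(X) \geq d_1 + \cdots + d_n - n$ into the equivalent statement $d(X) \geq s_n(X)$, which is exactly the conclusion of Theorem \ref{maintheorem}$(i)$ taken at $m=1$.

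Second, to legitimately invoke part $(i)$ of Theorem \ref{maintheorem} I must check that $d(X) \geq \alpha(I_X) = d_1$. Theorem \ref{hombound} guarantees $d(X) \geq \alpha(I_X) - 1$, so the only alternative is the boundary situation $d(X) = \alpha(I_X) - 1$, which is the regime of part $(ii)$. The main obstacle, then, is to rule out this boundary case when the support is a complete intersection.

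I would handle the obstacle by contradiction. Assume $d(X) = \alpha(I_X)-1$; then Theorem \ref{maintheorem}$(ii)$ with $m=1$ forces $s_n(X) \leq 2m-1 = 1$. But the formula from Lemma \ref{soclevalueci}, together with the standing assumptions of Section 4 that $d_i \geq 2$ for all $i$ and $n \geq 2$, yields
$$s_n(X) = d_1 + d_2 + \cdots + d_n - n \;\geq\; 2n - n \;=\; n \;\geq\; 2,$$
a contradiction. Hence part $(i)$ of Theorem \ref{maintheorem} applies, giving $d(X) \geq s_n(X) = d_1 + d_2 + \cdots + d_n - n$ as required.
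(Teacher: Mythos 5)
Your proposal is correct and follows the same route as the paper, which likewise derives the bound by combining Theorem \ref{maintheorem} at $m=1$ with the socle-degree formula of Lemma \ref{soclevalueci}. The only difference is that you explicitly rule out the boundary case $d(X)=\alpha(I_X)-1$ of Theorem \ref{maintheorem}(ii) (a detail the paper leaves implicit, since in that case $s_n(X)\leq 1\leq d(X)$ holds anyway), so your write-up is simply a more careful version of the same argument.
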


\begin{proof}
By Theorem \ref{maintheorem}, $d(X) \geq s_n(X)$.  Now use Theorem
\ref{soclevalueci}.
\end{proof}

\subsection{B\'ezout's Theorem.} It is known that the
bound of Corollary \ref{cibound} is
far from optimal.
For complete intersections of the form $X = CI(d_1,\ldots,d_n) \subseteq
\proj{n}$,
we will use B\'ezout's Theorem to improve known bounds.

There are many ways one can state B\'ezout's Theorem in $\proj{n}$.
The version we shall use can be found in Chapter 6.2 of \cite{g}. We
thank the anonymous referee for pointing out that this version is
valid also when $\K$ is algebraically closed field of positive
characteristic.

We first recall that the {\it degree}
of a scheme $W \subseteq \proj{n}$, denoted
$\deg(W)$, is defined to be $(\dim W)!$ times the leading
coefficient of the Hilbert polynomial of $W.$

\begin{theorem}[B\'ezout's Theorem]  Let $X$ be a projective subscheme
of $\proj{n}$ with $\dim X \geq 1$.  If $f \in \K[x_0,\ldots,x_n]$ is
a homogeneous form such that no component of $X$ is contained in $V(f)$,
the variety defined by $f$, then
\[\deg(X \cap V(f)) = \deg(f)\cdot\deg(X).\]
\end{theorem}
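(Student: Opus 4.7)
The plan is to deduce B\'ezout's Theorem from the behavior of Hilbert polynomials under multiplication by a homogeneous non-zero-divisor. First I would observe that the hypothesis ``no component of $X$ is contained in $V(f)$'' says precisely that $f$ lies outside every minimal prime of $I_X$. After passing to the ideal defining the top-dimensional part of $X$ (which changes neither $\dim X$ nor $\deg X$), one obtains an equidimensional ideal on which multiplication by $f$ is injective, and hence a short exact sequence of graded $R$-modules
\[0 \longrightarrow (R/I_X)(-d) \xrightarrow{\;\cdot f\;} R/I_X \longrightarrow R/(I_X,f) \longrightarrow 0,\]
where $d=\deg f$. Taking Hilbert polynomials, this becomes the identity $P_{R/(I_X,f)}(t) = P_{R/I_X}(t) - P_{R/I_X}(t-d)$.

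Next I would extract the degree from the leading coefficients. Writing
\[P_{R/I_X}(t) = \frac{\deg X}{(\dim X)!}\, t^{\dim X} + (\text{lower order terms}),\]
the difference $P_{R/I_X}(t)-P_{R/I_X}(t-d)$ is a polynomial of degree exactly $\dim X - 1$ whose leading coefficient, by the binomial expansion of $(t-d)^{\dim X}$, equals $\frac{d\cdot\deg X}{(\dim X - 1)!}$. Since $\dim(X\cap V(f)) = \dim X - 1$ under our hypothesis, and the degree of a scheme is $(\dim)!$ times the leading coefficient of its Hilbert polynomial, we read off $\deg(X\cap V(f)) = d\cdot\deg X$, as required.

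The main obstacle is the reduction to the equidimensional setting, i.e., making rigorous the passage from the possibly non-equidimensional $I_X$ to its top-dimensional part. One clean way to sidestep primary decomposition is to argue on the level of Hilbert polynomials directly: any embedded or lower-dimensional primary components of $I_X$ contribute only to the lower-order terms of $P_{R/I_X}(t)$, hence only to the lower-order terms on both sides of the displayed identity, so they drop out of the leading-coefficient computation. A second subtlety is verifying that $X \cap V(f)$ has the expected dimension $\dim X - 1$ (rather than being empty or smaller), which once again follows from the fact that $f$ avoids every minimal prime of $I_X$ and so the quotient $R/(I_X,f)$ has Krull dimension exactly one less than that of $R/I_X$. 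In practice, since the exact formulation we need is the one found in \cite{g}, I would invoke that reference to package these technicalities rather than reprove them from scratch.
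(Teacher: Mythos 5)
The paper offers no proof of this statement: it is imported verbatim from Chapter 6.2 of \cite{g} and used as a black box, so your outline --- the multiplication-by-$f$ exact sequence followed by a leading-coefficient comparison of Hilbert polynomials --- is in fact a reconstruction of the standard proof in the cited source, and your closing decision to defer the technicalities to \cite{g} is exactly what the paper does. In that sense the approach is the right one and matches the reference.

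However, one step of your sketch is genuinely wrong as written, and it is worth naming because the theorem is actually \emph{false} in the generality you implicitly allow. You claim that embedded or lower-dimensional primary components ``contribute only to the lower-order terms on both sides of the displayed identity, so they drop out of the leading-coefficient computation.'' The two sides of that identity have different degrees: a term of degree $\dim X - 1$ is ``lower order'' for $P_{R/I_X}$ but is precisely the \emph{leading} order of $P_{R/(I_X,f)}$. Concretely, in $\proj{3}$ with $R=\K[x,y,z,w]$, take $X=\operatorname{Proj}\bigl(R/(x^2,xy)\bigr)$ (the plane $x=0$ carrying an embedded line $x=y=0$) and $f=y$. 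No irreducible component of $X$ lies in $V(f)$, and $\deg X=1$, $\deg f=1$, yet $X\cap V(f)=\operatorname{Proj}\bigl(R/(x^2,y)\bigr)$ has degree $2$. Here multiplication by $f$ on $R/I_X$ has kernel $(x)/(x^2,xy)\cong \bigl(R/(x,y)\bigr)(-1)$, a module of dimension $\dim X-1$ whose Hilbert polynomial corrects the leading coefficient; no amount of leading-term bookkeeping rescues the identity. The correct repair is the one you mention first and then try to sidestep: either read ``component'' as including embedded components (so the hypothesis becomes ``$f$ avoids every associated prime of $I_X$,'' making $f$ a genuine non-zero-divisor), or assume $X$ unmixed. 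This gap is immaterial for the paper: the theorem is only applied (Theorem \ref{genci}, Corollary \ref{bezout}, Theorem \ref{n=2}) to complete intersections and hyperplane sections thereof, which are Cohen--Macaulay and hence have no embedded primes, so avoiding the minimal primes already makes $f$ a non-zero-divisor and your exact sequence argument goes through verbatim there.
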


To make use of this theorem, we recall two standard facts:
\begin{enumerate}
\item[$\bullet$] If $W$ is a reduced finite set of points, then $\deg(W) = |W|$.
\item[$\bullet$] If $W = CI(d_1,\ldots,d_r)$, then $\deg(W) = d_1\cdots
d_r$.
\end{enumerate}

First, a general result:

\begin{theorem}\label{genci} Let $Y$ be a curve in $\mathbb P^n$ with no
component
contained in a hyperplane. Let $V(f)$ be a hypersurface of degree $a>1$
such that
$X=Y\cap V(f)$ is a reduced zero-dimensional scheme. Then $X$ has
minimum distance
\[d(X)\geq (a-1)\deg(Y).\]
\end{theorem}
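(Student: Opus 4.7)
The plan is to combine Remark \ref{geometric}, which rewrites $d(X)$ as $|X|-hyp(X)$, with two applications of B\'ezout's Theorem: one to compute $|X|$ exactly, and one to bound $hyp(X)$.

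First I would compute $|X|$. Since $X$ is a reduced zero-dimensional scheme, $|X|=\deg(X)$. By hypothesis no component of $Y$ is contained in a hyperplane, and since $a>1$ we can apply B\'ezout to $Y$ and $V(f)$ (the hypothesis that $X$ is reduced guarantees in particular that no component of $Y$ lies in $V(f)$, since $\dim Y=1$). This yields
\[
|X|=\deg(Y\cap V(f))=a\cdot\deg(Y).
\]

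Next I would bound $hyp(X)$. Let $H$ be any hyperplane; I claim $|X\cap H|\leq \deg(Y)$. By the curve hypothesis, no component of $Y$ lies in $H$, so another application of B\'ezout gives $\deg(Y\cap H)=1\cdot\deg(Y)=\deg(Y)$. Now $X\cap H$ is a finite set of reduced points all lying on $Y\cap H$; since the degree of a zero-dimensional scheme is at least the number of distinct points in its support, we conclude $|X\cap H|\leq \deg(Y\cap H)=\deg(Y)$. Taking the maximum over all hyperplanes gives $hyp(X)\leq \deg(Y)$.

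Combining the two steps through Remark \ref{geometric},
\[
d(X)=|X|-hyp(X)\geq a\deg(Y)-\deg(Y)=(a-1)\deg(Y),
\]
as desired. The only subtle point is verifying the hypotheses for B\'ezout's Theorem in each application, specifically that no component of the one-dimensional $Y$ is contained in the relevant hypersurface; the first instance follows because $X=Y\cap V(f)$ is zero-dimensional (reduced), and the second is exactly the stated assumption on $Y$. Everything else is bookkeeping with the standard identities $\deg(W)=|W|$ for reduced finite sets and the leading-coefficient definition of degree.
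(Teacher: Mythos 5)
Your proposal is correct and follows essentially the same route as the paper: both apply B\'ezout once to get $|X|=a\cdot\deg(Y)$ and once to bound $hyp(X)\leq\deg(Y)$, then conclude via $d(X)=|X|-hyp(X)$. The only cosmetic differences are that the paper phrases the bound on $hyp(X)$ as a contradiction argument, and it rules out a component of $Y$ lying in $V(f)$ by noting such a component would be a point and hence contained in a hyperplane (rather than appealing directly to $\dim Y=1$).
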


\begin{proof} Suppose that $Y$ has a component $W$ contained in $V(f)$.
Then
$W\subseteq X$. Since $\dim(X)=0$, then $\dim(W)=0$, and so
$W=W_1\cup\cdots\cup W_m$,
where each $W_i$ is set-theoretically a point in $\mathbb P^n$. Since a
point is
always contained in a hyperplane, we have contradicted our assumption that
$Y$ has no component
in a hyperplane. So we can apply B\'ezout's Theorem to obtain
$$|X|=\deg(X)=a\cdot\deg(Y).$$

We have that $d(X)=|X|-hyp(X)$, so it suffices to show that $hyp(X)\leq
\deg(Y)$.
Suppose that $h=hyp(X)>\deg(Y)$ and that $V(L)$ is the hyperplane
containing the $h$ points of $X$.
Since no component of $Y$ is contained in $V(L)$, then $\dim(Y\cap
V(L))=0$ and furthermore we
can apply B\'ezout's Theorem once more to obtain that
$$\deg(Y\cap V(L))=\deg(L)\cdot\deg(Y)=\deg(Y),$$ since $\deg(L)=1$.
Since $X\subset Y$, then $X\cap V(L)\subseteq Y\cap V(L)$. Therefore the
$h$ points of
$X$ lying on $V(L)$ should be contained in $Y\cap V(L)$.
But then $\deg(Y\cap V(L))\geq h$, which contradicts the assumption that
$h>\deg(Y)$.
\end{proof}

\begin{example} We can construct sets of points such that the bound in
Theorem \ref{genci}
is attained. Let $Y\subset\mathbb P^n$ be an irreducible curve, not
contained in an
hyperplane. Let $g$ be a form of degree $a-1\geq 1$ and let $L$ be a
linear form such
that $X=V(L\cdot g)\cap Y$ is a reduced zero-dimensional scheme.

Since $V(L\cdot g)=V(L)\cup V(g)$, then $V(L)\cap Y\subseteq X$, is a
reduced
zero-dimensional scheme of degree $\deg(V(L)\cap Y)=\deg(Y)$. So the
hyperplane
$V(L)$ contains $\deg(Y)$ points of $X$. This implies that
$hyp(X)\geq \deg(Y)$. But from Theorem \ref{genci} we have
$hyp(X)\leq \deg(Y)$, and therefore we get an equality.
\end{example}

As a corollary, we improve the bound on $d(X)$ when $X$ is complete
intersection
with an additional condition.

\begin{corollary}\label{bezout} Let $X = CI(d_1,\ldots,d_n) \subseteq
\proj{n}$, with
$2\leq d_1\leq \cdots \leq d_n$. If $I_X= (F_1,\ldots,F_n)$, then for each
$i=1,\ldots,n$
let $X_i$ be the complete intersection
$CI(d_1,\ldots,\hat{d_i},\ldots,d_n)$, with ideal
$I_{X_i}=( F_1,\ldots,\hat{F_i},\ldots,F_n)$.  In addition,
suppose that there exists an index $j \in \{1,\ldots,n\}$ such that $X_j$
has no
component contained in a hyperplane.
Then
\[d(X) \geq  (d_1-1)d_2d_3\cdots d_n .\]
\end{corollary}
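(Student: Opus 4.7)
The plan is to apply Theorem \ref{genci} directly with $Y = X_j$ and $f = F_j$, where $j$ is the index guaranteed by the hypothesis, and then to verify by a short monotonicity argument that the resulting bound $(d_j-1)\prod_{k\neq j} d_k$ is always at least the claimed $(d_1-1)d_2\cdots d_n$.

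First I would check that the hypotheses of Theorem \ref{genci} are in place. Since $F_1,\ldots,F_n$ is a regular sequence cutting out the zero-dimensional scheme $X$, the $n-1$ forms obtained by deleting $F_j$ still form a regular sequence, so $X_j$ is a complete-intersection curve in $\proj{n}$. By the standard degree facts recalled just before Theorem \ref{genci}, $\deg(X_j)=\prod_{k\neq j} d_k$. Set-theoretically $X_j \cap V(F_j) = X$, which is reduced and zero-dimensional by the standing assumption that $X$ is a set of (reduced) points, and $\deg F_j = d_j \geq d_1 \geq 2 > 1$. Combined with the hypothesis that no component of $X_j$ lies in a hyperplane, Theorem \ref{genci} then yields
\[d(X) \geq (d_j-1)\deg(X_j) = (d_j-1)\prod_{k\neq j} d_k.\]

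To finish, I would compare this with the stated bound. Writing $P = d_1 d_2 \cdots d_n$, I can express $(d_i-1)\prod_{k\neq i} d_k = P(1 - 1/d_i)$ for any $i$. Since $d_1 \leq d_j$, we have $1/d_1 \geq 1/d_j$, hence $P(1-1/d_j) \geq P(1-1/d_1) = (d_1-1) d_2 \cdots d_n$, which gives the claimed inequality.

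I do not expect a substantive obstacle here: the corollary is essentially a tailored specialization of Theorem \ref{genci}, and the statement is written in the ``worst-case'' form $(d_1-1)d_2\cdots d_n$ precisely so that it holds no matter which index $j \in \{1,\ldots,n\}$ realizes the hypothesis. The only subtlety worth flagging is ensuring that $X_j \cap V(F_j)$ is the reduced, zero-dimensional scheme $X$, which is immediate from the standing assumption on $X$ together with $I_X = (F_1,\ldots,F_n)$; no positive-dimensional component of $X_j$ can sit inside $V(F_j)$, since such a component would force $\dim X \geq 1$.
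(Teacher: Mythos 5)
Your proposal is correct and follows essentially the same route as the paper: apply Theorem \ref{genci} with $Y=X_j$ and $f=F_j$ to get $d(X)\geq (d_j-1)\prod_{k\neq j}d_k$, then compare with $(d_1-1)d_2\cdots d_n$ using $d_1\leq d_j$ (the paper phrases this last step as $d_2\cdots d_n\geq d_1\cdots\hat{d_j}\cdots d_n$, i.e., subtracting the complementary product from $d_1\cdots d_n$, which is the same monotonicity fact as your $P(1-1/d_i)$ identity). Your extra verification of the hypotheses of Theorem \ref{genci} is a welcome addition but not a departure.
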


\begin{proof}  Because $X$ is a reduced complete intersection, $|X| =
d_1\cdots d_n$. Also, for each $i$, $X_i$ is a complete intersection curve
of degree $\deg(X_i)=d_1\cdots\hat{d_i}\cdots d_n$.

Let $j$ be the index such that $X_j$ has no component contained in a
hyperplane. From
Theorem \ref{genci}, with $Y=X_j$, and $f=F_j$, we obtain
$$d(X)\geq (d_j-1)d_1\cdots\hat{d_j}\cdots d_n.$$
Since $d_1\leq d_2\leq\cdots\leq d_n$, then
$d_2\cdots d_n\geq d_1\cdots \hat{d_j}\cdots d_n$. Hence, the assertion.
\end{proof}

We expect that the hypothesis in Corollary \ref{bezout} that there exists
an $X_j$ with no
component contained in a hyperplane can be dropped.  We make the following
conjecture:

\begin{conjecture}\label{conjecture} Let $X = CI(d_1,\ldots,d_n) \subseteq
\proj{n}$, with
$2\leq d_1\leq \cdots \leq d_n$.  Then $d(X) \geq  (d_1-1)d_2d_3\cdots d_n$.
\end{conjecture}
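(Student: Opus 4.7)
The plan is to remove the technical hypothesis in Corollary \ref{bezout} by working with a single, optimally chosen hyperplane rather than requiring that some curve $X_j$ have no component in any hyperplane. Fix a linear form $L$ such that $V(L)$ contains $h = hyp(X)$ points of $X$. Since $d(X) = |X| - hyp(X) = d_1\cdots d_n - h$, it suffices to establish $h \leq d_2\cdots d_n$. The proof of Theorem \ref{genci} really only uses that no component of $Y = X_j$ lies in the specific hyperplane $V(L)$, not in an arbitrary hyperplane. Therefore, if for some $j \in \{1, \ldots, n\}$ the curve $X_j$ has no component contained in $V(L)$, then B\'ezout applied to $X_j$ first with $F_j$ (valid because $X$ is zero-dimensional) and then with $L$ yields $h \leq \deg(X_j) = d_1\cdots\hat{d_j}\cdots d_n \leq d_2\cdots d_n$, and we are done.

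The main obstacle is the case in which every one of the $n$ curves $X_j$ contains at least one component inside the single hyperplane $V(L)$. To attack this case, I would decompose $X_j = C_j \cup C_j'$, where $C_j$ is the union of components of $X_j$ contained in $V(L)$ and $C_j'$ the union of components not contained in $V(L)$. Applying B\'ezout to $C_j'$ with $V(L)$ (permitted because no component of $C_j'$ lies in $V(L)$) and to $C_j'$ with $V(F_j)$ (permitted because $X$ is zero-dimensional), together with the identity $\deg(C_j) + \deg(C_j') = d_1\cdots\hat{d_j}\cdots d_n$, should yield inequalities of the form $h \leq |X \cap C_j| + \deg(C_j')$ for each $j$, separating the points of $X \cap V(L)$ coming from $C_j$ from those coming from the transverse intersection of $C_j'$ with $V(L)$.

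To combine the $n$ inequalities obtained this way, the most natural route is to restrict the whole setup to $V(L) \cong \proj{n-1}$: the hypothesis that every $X_j$ has a component in $V(L)$ is equivalent to the statement that, for each $j$, the restricted forms $\bar F_1, \ldots, \widehat{\bar F_j}, \ldots, \bar F_n$ fail to form a regular sequence in $\K[x_0, \ldots, x_{n-1}]$. This is a very restrictive hypothesis, and I would try to use it either to derive an outright contradiction or to reduce to a lower-dimensional instance of the conjecture via induction on $n$ (the base case $n=2$ being immediate, since $F_1$ and $F_2$ share no common linear factor as a regular sequence, so no line $V(L)$ can divide both). The hardest part will be controlling how the various $C_j$ overlap inside $V(L)$: a point of $X$ lying on $V(L)$ may be contributed by several $C_j$ simultaneously, and the $C_j$ may share singularities with $X$ itself, so a naive additive bookkeeping of points will be insufficient. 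I expect that a careful Hilbert-function analysis of the restricted ideal $(\bar F_1, \ldots, \bar F_n) \subseteq \K[x_0, \ldots, x_{n-1}]$, or a liaison-theoretic argument on the residual subscheme $X \setminus (X \cap V(L))$, will be needed to close the gap; the fact that the authors state this as a conjecture rather than a theorem suggests that none of these combinatorial reductions is routine.
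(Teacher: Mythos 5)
The statement you are addressing is stated in the paper as Conjecture \ref{conjecture}, not as a theorem: the authors only establish it under the extra hypothesis of Corollary \ref{bezout} (that some $X_j$ has no component contained in a hyperplane) and, unconditionally, only in the case $n=2$ (Theorem \ref{n=2}, which is essentially the base case you mention: a line containing more than $d_i$ points of $X$ would meet $V(F_i)$ in more than $d_i$ points, hence divide $F_i$ for $i=1,2$, contradicting that $F_1,F_2$ form a regular sequence). So there is no complete proof in the paper for you to match, and your proposal does not supply one either.

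Concretely, the gap is the case you isolate yourself: when every one of the curves $X_j$ has a component inside the single hyperplane $V(L)$ realizing $hyp(X)$. Your first reduction is sound --- the proof of Theorem \ref{genci} only uses that no component of $Y$ lies in the \emph{specific} hyperplane cutting out the $h=hyp(X)$ points, so if some $X_j$ avoids $V(L)$ componentwise you get $h \leq \deg(X_j) \leq d_2\cdots d_n$ and conclude via Remark \ref{geometric}. But in the remaining case the proposed bookkeeping does not close. Writing $X_j = C_j \cup C_j'$ and applying B\'ezout to $C_j'$ with $V(L)$ and to $C_j$ with $V(F_j)$ gives at best $h \leq d_j\deg(C_j) + \deg(C_j')$, and since $\deg(C_j)+\deg(C_j') = d_1\cdots\hat{d_j}\cdots d_n$, this exceeds the target $d_2\cdots d_n$ as soon as $C_j \neq \emptyset$ and $d_j \geq 2$ --- which is exactly the situation under consideration. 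The subsequent suggestions (restricting to $V(L)\cong\proj{n-1}$, a Hilbert-function analysis of the restricted ideal, liaison on the residual scheme) are named but not carried out, and you concede as much. As written, your argument proves the conjecture only under a hypothesis of the same nature as that of Corollary \ref{bezout} (weakened to refer to the particular hyperplane $V(L)$), together with the already-proved case $n=2$; the general statement remains open, consistent with its status in the paper as a conjecture.
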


When $n=2$, we only need B\'ezout's Theorem for curves to prove Conjecture
\ref{conjecture}.

\begin{theorem}\label{n=2}
  Let $X = CI(d_1,d_2) \subseteq \proj{2}$, with
$2\leq d_1\leq d_2$.  Then $d(X) \geq  (d_1-1)d_2$.
\end{theorem}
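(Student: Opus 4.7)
The plan is to show directly that $hyp(X) \leq d_2$. Combined with $|X| = d_1 d_2$ (the degree of a reduced complete intersection, as noted before the theorem), this gives
\[d(X) = |X| - hyp(X) \geq d_1 d_2 - d_2 = (d_1-1)d_2,\]
which is the desired inequality. The advantage of bounding $hyp(X)$ directly, rather than trying to invoke Corollary \ref{bezout}, is that we avoid the hypothesis that some $X_j$ has no component contained in a line; in $\mathbb{P}^2$ this turns out to be automatic for a very short reason.

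First I would fix generators $I_X = (F_1, F_2)$ with $\deg F_i = d_i$ and pick a line $V(L)$ containing $h = hyp(X)$ of the points of $X$. Since $X \subseteq V(F_i)$ for $i=1,2$, all $h$ of these points lie inside $V(L) \cap V(F_i)$. Whenever $L$ does not divide $F_i$, the linear form $L$ and $F_i$ share no common factor (since $L$ is irreducible), so by plane-curve B\'ezout the intersection $V(L) \cap V(F_i)$ is a zero-dimensional scheme of degree $1 \cdot d_i = d_i$, giving $h \leq d_i$. So as soon as we locate any index $i$ with $L \nmid F_i$, we conclude $h \leq d_i \leq d_2$, which is exactly what we need.

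The only potential obstruction is the case where $L$ divides both $F_1$ and $F_2$, since then the B\'ezout argument fails for each curve simultaneously. But this situation cannot occur: if $L \mid F_1$ and $L \mid F_2$, then $V(L) \subseteq V(F_1,F_2) = X$, forcing the zero-dimensional scheme $X$ to contain a curve, a contradiction. Thus some $i \in \{1,2\}$ with $L \nmid F_i$ always exists, the B\'ezout step goes through, and we obtain $hyp(X) \leq d_2$. I expect this double-divisibility exclusion to be the only real content of the proof; everything else is a direct application of B\'ezout together with the two standard degree facts about complete intersections and reduced point sets recalled in the excerpt.
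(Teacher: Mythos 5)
Your proposal is correct and is essentially the paper's own argument: both rest on plane-curve B\'ezout applied to $V(L)\cap V(F_i)$ together with the observation that $L$ cannot divide both generators of the complete intersection. The paper phrases it as a contradiction (assuming $hyp(X)>d_2$ forces $L\mid\gcd(F_1,F_2)$, violating the regular sequence condition) while you argue directly, but the mathematical content is the same.
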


\begin{proof}  Let $h = hyp(X)$, and suppose that $h > d_2$.  Therefore,
there is a line (since we are in $\mathbb{P}^2$) that contains $h > d_2$
points
of $X$.  Let $L$ be the form that defines this line, and suppose that
$I_X = (F_1,F_2)$.  B\'ezout's Theorem for curves in $\mathbb{P}^2$
implies
that $L|F_1$ since $V(L) \cap V(F_1)$ meet at $h > d_1$ points.
Similarly,
$L|F_2$ since $V(L) \cap V(F_2)$ meet at $h > d_2$ points.  But
then $L$ divides gcd$(F_1,F_2)$, contradicting the assumption
that $F_1$ and $F_2$ form a regular sequence.
\end{proof}

\begin{remark}
The bound of Theorem \ref{n=2} improves the bound of Gold, Little, and
Schenck \cite{gls}
(see Corollary \ref{cibound})
when $3 \leq d_1$.  Indeed,
we have $d_1d_2 - d_2 > d_1+d_2 -2$ if and only if
$d_1d_2-d_1 = d_1(d_2-1) > 2(d_2-1) = 2d_2 -2$ if and only if $d_1 \geq
3$.  When
$2=d_1$, the two bounds are the same, i.e, $(d_1-1)d_2 = d_1+d_2-2 = d_2$.
\end{remark}

%%%%%%%%%%%%%%%%%%%%%%%%%%%%%%%%%%%%%%%%%%%%%%%%%%%%%%%%%%%%%%%%%%%%%%%%%%%%%%%%%%%%%%%%%%%%

\bibliographystyle{amsalpha}

\end{document}